\providecommand{\U}[1]{\protect\rule{.1in}{.1in}}
\newtheorem{theorem}{Theorem}[section]
\newtheorem{corollary}[theorem]{Corollary}
\newtheorem{proposition}[theorem]{Proposition}
\theoremstyle{definition}
\newtheorem{definition}[theorem]{Definition}
\theoremstyle{remark}
\newtheorem{remark}[theorem]{Remark}
\numberwithin{equation}{section}
\def\R{{\bf R}}
\def\X{\mathcal{X}}
\def\dist{{\rm dist}}
\def\diam{{\rm diam}}
\def\Lip{\textrm{Lip}}
\newcommand{\ls}{\operatornamewithlimits{limsup}}
\newcommand{\li}{\operatornamewithlimits{liminf}}
\newcommand{\argmin}{\operatornamewithlimits{argmin}}
\title{Nonsmooth optimization using Taylor-like models:  error bounds, convergence, and termination criteria}
\author{D. Drusvyatskiy\thanks{Department of Mathematics, University of Washington, 
		Seattle, WA 98195; \texttt{www.math.washington.edu/{\raise.17ex\hbox{$\scriptstyle\sim$}}ddrusv}. Research of Drusvyatskiy was partially supported by the AFOSR YIP award FA9550-15-1-0237 and by the US-Israel Binational Science Foundation Grant
		2014241.}\and 
	A.D. Ioffe\thanks{Department of Mathematics, Technion-Israel Institute of Technology, 32000 Haifa, Israel; \texttt{ioffe@tx.technion.ac.il}. Research supported in part by the US-Israel Binational Science Foundation Grant
	2014241.}
	\and A.S. Lewis\thanks{School of Operations Research and Information Engineering, Cornell University, Ithaca, New
		York, USA; \texttt{people.orie.cornell.edu/{\raise.17ex\hbox{$\scriptstyle\sim$}}aslewis}. Research supported in part by National
		Science Foundation Grant DMS-1208338 and by the US-Israel Binational Science Foundation Grant
		2014241.}}
\begin{document}
\date{}
\maketitle

\begin{abstract}
We consider optimization algorithms that successively minimize simple Taylor-like models of the objective function. Methods of Gauss-Newton type for minimizing the composition of a convex function and a smooth map are common examples. Our main result is an explicit relationship between the step-size of any such algorithm and the slope of the function at a nearby point. 
Consequently, we (1) show that the step-sizes can be reliably used to terminate the algorithm, (2) prove that as long as the 
 step-sizes tend to zero, every limit point of the iterates is stationary, and (3) show that conditions, akin to classical quadratic growth, imply that the step-sizes linearly bound the distance of the iterates to the solution set. The latter so-called error bound property is typically used to establish linear (or faster) convergence guarantees. Analogous results hold when the step-size is replaced by the square root of the decrease in the model's value. We complete the paper with extensions to when the models are minimized only inexactly.
\end{abstract}

\bigskip
\noindent{\bf Keywords:} Taylor-like model, error-bound, slope, subregularity, Kurdyka-{\L}ojasiewicz inequality, Ekeland's principle

\bigskip
\noindent{\bf AMS 2010 Subject Classification:} 65K05, 90C30, 49M37, 65K10

\section{Introduction}
A basic algorithmic strategy for minimizing a function $f$ on $\R^n$ is to successively minimize simple ``models'' of the function, agreeing with $f$ at least up to first-order near the current iterate. We will broadly refer to such models as ``Taylor-like''. Some classical examples will help ground  the exposition. When $f$ is smooth, common algorithms given a current iterate $x_k$  declare the next iterate $x_{k+1}$ to be a minimizer of the quadratic model
$$m_k(x):=f(x_k)+\langle \nabla f(x_k),x-x_k\rangle+\frac{1}{2}\langle B_k(x-x_k),x-x_k\rangle.$$
%Different choices of the matrices $B_k$ yield algorithms with starkly different properties. 
When the matrix $B_k$ is a multiple of the identity, the scheme reduces to  gradient descent; when $B_k$ is the Hessian $\nabla^2 f(x_k)$,  one recovers Newton's method; adaptively changing $B_k$ based on accumulated  information covers Quasi-Newton algorithms. Higher-order models can also appear; the cubicly regularized Newton's method of Nesterov-Polyak \cite{cubic_nest} uses the models
$$m_k(x):=f(x_k)+\langle \nabla f(x_k),x-x_k\rangle+\frac{1}{2}\langle \nabla^2 f(x_k)(x-x_k),x-x_k\rangle +\frac{M}{6}\|x-x_k\|^3.$$
For more details on Taylor-like models in smooth minimization, see  Nocedal-Wright \cite{nocedal_wright}. 
   
The algorithmic strategy generalizes far beyond smooth minimization. One important arena, and the motivation for the current work, is the class of convex composite problems
\begin{equation}\label{eqn:convex_com_prob}
\min_x~ g(x) + h(c(x)).
\end{equation}
Here $g$ is a closed convex function (possibly taking infinite values), $h$ is a finite-valued Lipschitz  convex function, and $c$ is a smooth map. 
 Algorithms for this problem class have been studied extensively, notably in \cite{powell_paper,burke_com,yu_super,steph_conv_comp,fletcher_back,pow_glob} and more recently in 
  \cite{prx_lin,error_bound_d_lewis,composite_cart}.  Given a current iterate $x_k$, common algorithms declare the next iterate $x_{k+1}$ to be a minimizer of 
\begin{equation}\label{eqn:mod_con_comp}
m_k(x):=g(x)+h\Big(c(x_k)+\nabla c(x_k)(x-x_k)\Big)+\frac{1}{2}\langle B_k(x-x_k),x-x_k\rangle.
\end{equation}
The underlying assumption is that the minimizer of $m_k$ can be efficiently computed. This is the case for example, when interior-point methods can be directly applied to the convex subproblem or when evaluating $c$ and $\nabla c$ is already the computational bottleneck. The latter setting is ubiquitous in derivative free optimization; see for example the discussion in Wild  \cite{pounders}. 
The model $m_k$ in \eqref{eqn:mod_con_comp} is indeed Taylor-like, even when $g$ and $h$ are nonconvex, since the inequality $|m_k(y)-f(y)|\leq \frac{\Lip(h)\Lip(\nabla c)+\|B_k\|}{2}\|y-x_k\|^2$ holds for all points $y$, as the reader can readily verify.
%Such iterative schemes cover a wide variety of algorithms.
When $B_k$ is a multiple of the identity, the resulting method is called the ``prox-linear algorithm'' in \cite{error_bound_d_lewis,prx_lin}, and it subsumes a great variety of schemes. 

In the setting $h=0$, the prox-linear algorithm reduces to the proximal-point method on the function $g$ \cite{prox_rock,martinet1,martinet2}. When $c$ maps to the real line and $h$ is the identity function, the scheme is the proximal gradient algorithm  on the function $g+c$ \cite{nest1,beck}. The Levenberg-Marquardt algorithm \cite{marq}  for nonlinear least squares  -- a close variant of Gauss-Newton -- corresponds to setting $g=0$ and $h=\|\cdot\|^2$. Allowing $B_k$ to vary with accumulated information yields variable metric variants of the aforementioned algorithms; see e.g. \cite{Scheinberg2016,Byrd:2016,burke_com}. Extensions where $h$ and $g$ are not necessarily convex, but are nonetheless simple, are also important and interesting, in large part because of nonconvex penalties and regularizers common  in machine learning applications.  Other important variants interlace the model minimization step with inertial corrector steps, such as in accelerated gradient methods \cite{nest_orig,ghadimi_lan }, cubically regularized Newton \cite{accel_newton}, and convex composite algorithms \cite{accel_prox_comp}.

 In this work, we take a broader view of nonsmooth optimization algorithms that use Taylor-like models.    
Setting the stage, consider the minimization problem 
$$\min_x~ f(x)$$
for an arbitrary lower-semicontinuous function $f$ on $\R^n$. The model-based algorithms we consider simply iterate the steps: $x_{k+1}$ is a minimizer of some model $f_{x_k}(\cdot)$ based at $x_k$. In light of the discussion above, we assume that the models $f_{x_k}$ approximate $f$ (uniformly) up to first-order, meaning$$|f_{x_k}(x)-f(x)|\leq \omega(\|x-x_k\|)\qquad \textrm{ for all }k\in \mathbb{N} \textrm{ and }x\in\R^n,$$
where $\omega$ is any $C^1$-smooth function satisfying $\omega(0)=\omega'(0)=0$. For uses of a wider class of models for bundle methods, based on cutting planes, see Noll-Prot-Rondepierre\cite{noll_model}.

In this great generality, we begin with the following basic question.
\begin{center}
When should one terminate an algorithm that uses  Taylor-like models? 
\end{center}
For smooth nonconvex optimization, the traditional way to reliably terminate the algorithm is to stop when the norm of the gradient at the current iterate is smaller than some tolerance. For nonsmooth problems, termination criteria based on optimality conditions along the iterates may be meaningless as they may never be satisfied even in the limit. For example, one can easily exhibit a convex composite problem so that the iterates generated by the prox-linear algorithm described above converge to a stationary point, while the optimality conditions at the iterates are not satisfied even in the limit.\footnote{One such univariate example is $\min_x f(x)=|\frac{1}{2}x^2+x|$.  
The prox-linear algorithm for convex composite minimization  \cite[Algorithm 5.1]{error_bound_d_lewis}  initiated to the right of  the origin -- a minimizer of $f$ -- will generate a sequence $x_k\to 0$ with $|f'(x_k)|\to 1$.}  Such lack of natural stopping criteria for nonsmooth first-order methods has been often remarked (and is one advantage of bundle-type methods).

There are, on the other hand, two appealing stopping criteria one can try: terminate the algorithm when either the step-size $\|x_{k+1}-x_{k}\|$ or the model improvement $f(x_{k})-\inf f_{x_k}$ is sufficiently small. We will prove that both of these simple termination criteria are indeed reliable in the following sense.  Theorem~\ref{thm:main} and Corollary~\ref{cor:inex_eval_base} show that if either the step-size $\|x_{k+1}-x_{k}\|$ or the model improvement $f(x_{k})-\inf f_{x_k}$ is small, then there exists a point $\hat{x}$ close to $x$ in both distance and in function value, which is nearly stationary for the problem. Determining the point $\hat{x}$ is not important; the only role of $\hat x$ is to certify that the current iterate $x_k$ is ``close to near-stationarity'' in the sense above. 
 Theorem~\ref{thm:main} follows quickly from Ekeland's variational principle \cite{var_princ} -- a standard variational analytic tool. For other uses of the technique in variational analysis, see for example the survey~\cite{ioffe_survey}.  Stopping criterion based on small near-by subgradients has appeared in many other contexts such as in descent methods of \cite{gold_eps_stat} and gradient sampling schemes of \cite{rob_grad_samp}.

%for the convex composite algorithm discussed above, 
Two interesting consequences for convergence analysis flow from there.
Suppose that the models are chosen in such a way that the step-sizes $\|x_{k+1}-x_k\|$ tend to zero. This assumption is often enforced by ensuring that  $f(x_{k+1})$ is smaller than $f(x_{k})$ by at least  a multiple of  $\|x_{k+1}-x_k\|^2$ (a sufficient decrease condition) using a line-search procedure or by safeguarding the minimal eigenvalue of $B_k$.
 Then assuming for simplicity that $f$ is continuous on its domain,  any limit point $x^*$ of the iterate sequence $x_k$ will be stationary for the problem (Corollary~\ref{cor:stat}).\footnote{By stationary, we mean that zero lies in the limiting subdifferential of the function at the point.} Analogous results hold with the step-size replaced by $f(x_{k})-\inf f_{x_k}$.

The subsequence convergence result is satisfying, since very little is assumed about the underlying algorithm. A finer analysis of linear, or faster, convergence rates relies on some regularity of the function $f$ near a limit point $x^*$ of the iterate sequence $x_k$. One of the weakest such regularity assumptions is that for all $x$ near $x^*$, the ``slope'' of $f$ at $x$  linearly bounds the distance of $x$  to the set of stationary points $S$ -- the ``error''. Here, we call this property the {\em slope error-bound}.
To put it in perspective, we note that the slope error-bound
always entails a classical quadratic growth condition away from $S$ (see \cite{crit_semi,tilt_other}), and is equivalent to it whenever $f$ is convex (see \cite{fran_sub,klat_kum_book}). Moreover, as an aside, we observe in Theorem~\ref{thm:slope_kl_ineq} and Proposition~\ref{prop:conv_kurd_loj} that under mild conditions, the slope error-bound is equivalent to the  ``Kurdyka-{\L}ojasiewicz inequality'' with exponent $1/2$ -- an influential condition also often used to prove linear convergence.

Assuming the slope error-bound, a typical convergence analysis strategy aims to deduce that the step-sizes
$\|x_{k+1}-x_k\|$ linearly bound the distance $\dist(x_k;S)$. Following Luo-Tseng \cite{error_conv}, we call the latter property the {\em step-size error-bound}. We show in Theorem~\ref{thm:slop_subreg} that the slope error-bound indeed always implies the step-size error-bound, under the common assumption that the growth function $\omega(\cdot)$ is a quadratic. The proof is a straightforward consequence of the relationship we have established between the step-size and the slope at a nearby point -- underscoring the power of the technique.

In practice, exact minimization of the model function $f_{x_k}$ can be impossible. Instead, one can obtain a point $x_{k+1}$ that is only nearly optimal  or nearly stationary for the problem $\min f_{x_k}$. Section~\ref{sec:inexact} shows that all the results above generalize to this more realistic setting. In particular, somewhat surprisingly, we argue that limit points of the iterates will be stationary even if the tolerances
on optimality (or stationarity) and the step-sizes $\|x_{k+1}-x_k\|$ tend to zero at independent rates. The arguments in this inexact setting follow by applying the key result, Theorem~\ref{thm:main}, to small perturbations of $f$ and $f_{x_k}$, thus illustrating the flexibility of the theorem.

The convex composite problem \eqref{eqn:convex_com_prob} and the prox-linear algorithm (along with its variable metric variants) is a fertile application arena for the techniques developed here. An early variant of the key Theorem~\ref{thm:main} in this setting appeared recently in \cite[Theorem 5.3]{error_bound_d_lewis} and was used there to establish sublinear and linear convergence guarantees for the prox-linear method. We review these results and derive extensions in Section~\ref{sec:conc_comp_ill}, as an illustration of our techniques. An important deviation of ours from earlier work is the use of the step-size as the fundamental analytic tool, in contrast to the $\Delta$ measures of Burke \cite{burke_com} and the criticality measures in Cartis-Gould-Toint \cite{composite_cart}. To the best of our knowledge, the derived relationship between the step-size and stationarity at a nearby point is entirely new. The fact that the slope error-bound implies that both the step-size and the square root of the model improvement linearly bounds the distance to the solution set (step-size and model error-bounds) is entirely new as well; previous related results have relied on polyhedrality assumptions on $h$.

Though the discussion above takes place over the Euclidean space $\R^n$, the most appropriate setting for most of our development is over an arbitrary complete metric space. This is the setting of the paper.
The outline is as follows. In Section~\ref{sec:notation}, we establish basic notation and recall Ekeland's variational principle. Section~\ref{sec:main} contains our main results. Section~\ref{sec:conc_comp_ill} instantiates the techniques for the prox-linear algorithm in composite minimization, while Section~\ref{sec:inexact} explores extensions when the subproblems are solved inexactly. 

\section{Notation}\label{sec:notation}
%We closely follow standard terminology in \cite{}. 
Fix a complete metric space $\mathcal{X}$ with the metric $d(\cdot,\cdot)$. We denote the open unit ball of radius $r>0$ around a point $x$ by ${\bf B}_r(x)$.
The distance from  $x$ to a set $Q\subset \mathcal{X}$ is 
$$\dist(x;Q):=\inf_{y\in Q}~ d(x,y).$$
We will be interested in minimizing functions mapping $\mathcal{X}$ to the extended real line $\overline{\R}:=\R\cup\{\pm\infty\}$. A function $f\colon\X\to\overline \R$ is called  {\em lower-semicontinuous} (or {\em closed}) if the inequality $\li_{x\to \bar x} f(x)\geq f(\bar x)$ holds for all points $\bar x\in \X$. 

Consider a closed function $f\colon\mathcal{X}\to\overline{\R}$ and a point $\bar{x}$ with $f(\bar x)$ finite.
The {\em slope} of $f$ at  $\bar{x}$ is simply its maximal instantaneous rate of decrease:
$$|\nabla f|(\bar x):=\ls_{x\to \bar x}~\frac{(f(\bar x)-f(x))^+}{d(\bar{x},x)}.$$
Here, we use the notation $r^+=\max\{0,r\}$. 
If $f$ is a differentiable function on a Euclidean space, the slope $|\nabla f|(\bar x)$ simply coincides with the norm of the gradient $\|\nabla f(\bar x)\|$, and hence the notation. For a convex function $f$, the slope $|\nabla f|(\bar x)$ equals the norm of the shortest subgradient $v\in\partial f(\bar x)$. For more details on the slope and its uses in optimization, see the survey \cite{ioffe_survey} or the thesis \cite{thesis}.

The function $x\mapsto |\nabla f|(x)$ lacks basic lower-semicontinuity  properties. As a result when speaking of algorithms, it is important to introduce the {\em limiting slope}
$$\overline{|\nabla f|}(\bar x):=\li_{x\to \bar{x},\, f(x)\to f(\bar x)}~ |\nabla f|(x).$$
In particular, if $f$ is continuous on its domain, then $\overline{|\nabla f|}$ is simply the lower-semicontinuous envelope of $|\nabla f|$. We say that a point $\bar x$ is {\em stationary} for $f$ if equality $\overline{|\nabla f|}(\bar x)=0$ holds.

We will be interested in locally approximating functions up to first-order. Seeking to measure the ``error in approximation'', we introduce the following definition. 
 
%First-order variational properties are captured by subdifferential constructions.
% A vector $v$ is a {\em Fr\'{e}chet subgradient} of $f$ at $\bar x$, denoted $v\in\hat\partial f(\bar x)$, if it satisfies
%$$f(z)\geq f(\bar x)+\langle v,z-\bar x\rangle+o(\|z-x\|)\qquad \textrm{ as } z\to \bar x.$$
%The function $x\mapsto |\nabla f|(x)$ lacks basic continuity properties. 
%As a result, we use the limiting slope 
%$$\overline{|\nabla f|}(\bar x)=\li_{x\to \bar{x},\, f(x)\to f(\bar x)}$$

%graph of the operator $x\mapsto\hat\partial f(\bar x)$ lacks basic closure properties; consequently it is important to enlarge the construction. The {\em limiting subdifferential} of $f$ at $\bar x$, denoted $\partial f(\bar x)$, consists of all vectors $v$ such that there are sequences $x_i$ and $v_i\in \hat\partial f(x_i)$ satisfying $(x_i,f(x_i),v_i)\to(\bar x,f(\bar x),v)$. A point $x$ is {\em stationary} for $f$ if the inclusion $0\in \partial f(x)$ holds. More generally, the distance $\dist(0;\partial f(x))$ serves as a natural measure of near-stationarity.

\begin{definition}[Growth function]
A differentiable univariate function $\omega\colon\R_+\to\R_+$ is called a {\em growth function} if it satisfies $\omega(0)=\omega'(0)=0$ and $\omega'>0$ on $(0,\infty)$. If in addition, equalities $\lim_{t\to 0} \omega'(t)=\lim_{t\to 0} \omega(t)/\omega'(t)=0$ hold, we say that $\omega$ is a {\em proper growth function}.
\end{definition}

\noindent The main examples of proper growth functions are $\omega(t):=\frac{\eta}{r}\cdot t^{r}$ for real $\eta>0$ and $r >1$. 

The following result, proved in \cite{var_princ}, will be our main tool. The gist of the theorem is that if a point $\bar x$ nearly minimizes a closed function, then $\bar x$ is close to a a true minimizer of a slightly perturbed function. 
\begin{theorem}[Ekeland's variational principle] \label{thm:eke}
	Consider a closed function $g\colon\X\to\overline{\R}$ that is bounded from below. Suppose that for some $\epsilon>0$ and $\bar x\in \R^n$, we have $g(\bar x)\leq \inf g+\epsilon$. Then for any real $\rho >0$, there exists a point $\hat x$ satisfying
	\begin{enumerate}
		\item $g(\hat x)\leq g(\bar x)$,
		\item $d(\bar x,\hat x)\leq \epsilon/\rho$,
		\item\label{it:minim} $\hat x$ is the unique minimizer of the perturbed function $x\mapsto g(x)+\rho\cdot  d(x,\hat x)$. 
	\end{enumerate}	
\end{theorem}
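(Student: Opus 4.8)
The plan is to establish this classical result by the standard ``partial order plus recursive construction'' argument. First I would introduce on $\X$ the relation
$$x \preceq y \quad \Longleftrightarrow \quad g(x) + \rho\, d(x,y) \le g(y),$$
and check that it is a genuine partial order: reflexivity is immediate, antisymmetry follows by adding the two defining inequalities (which forces $d(x,y)=0$), and transitivity follows by adding the two defining inequalities and invoking the triangle inequality for $d$. Two features of this order will be used repeatedly. First, every ``lower section'' $S(y):=\{x\in\X : x\preceq y\}$ is closed, since $x\mapsto g(x)+\rho\, d(x,y)$ is lower semicontinuous (a sum of the lsc function $g$ and the continuous function $d(\cdot,y)$) and $S(y)$ is one of its sublevel sets. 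Second, $x\preceq y$ implies $\rho\, d(x,y)\le g(y)-g(x)$, so control of $g$-values over a lower section yields control of its diameter.

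Next I would build a $\preceq$-decreasing sequence starting at $x_0:=\bar x$. Inductively, given $x_n$, the section $S(x_n)$ is nonempty (it contains $x_n$) and $g$ is bounded below on it, so $\inf_{S(x_n)} g$ is finite; choose $x_{n+1}\in S(x_n)$ with $g(x_{n+1})\le \inf_{S(x_n)} g + 2^{-n}$. Since $x_{n+1}\preceq x_n$, transitivity gives the nested inclusions $S(x_{n+1})\subseteq S(x_n)$. For any $x\in S(x_{n+1})$ we then have $\rho\, d(x,x_{n+1}) \le g(x_{n+1})-g(x) \le g(x_{n+1}) - \inf_{S(x_n)} g \le 2^{-n}$; in particular $d(x_m,x_{n+1})\le 2^{-n}/\rho$ for all $m\ge n+1$, so $(x_n)$ is Cauchy. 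By completeness of $\X$, $x_n\to\hat x$ for some $\hat x\in\X$, and since each $S(x_n)$ is closed and contains the tail $(x_m)_{m\ge n}$, we get $\hat x\in S(x_n)$, i.e.\ $\hat x\preceq x_n$, for every $n$.

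It remains to read off the three conclusions. Taking $n=0$ gives $\hat x\preceq\bar x$, i.e.\ $g(\hat x)+\rho\, d(\hat x,\bar x)\le g(\bar x)$; this is conclusion (1) directly, and conclusion (2) follows via $d(\hat x,\bar x)\le (g(\bar x)-g(\hat x))/\rho \le (g(\bar x)-\inf g)/\rho \le \epsilon/\rho$. For conclusion (3), suppose $z\in\X$ satisfies $g(z)+\rho\, d(z,\hat x)\le g(\hat x)$ --- which holds in particular for any $z$ minimizing the perturbed function, since $\hat x$ itself attains the value $g(\hat x)$ there. This inequality says precisely $z\preceq\hat x$, hence $z\preceq x_n$, hence $z\in S(x_n)$, for all $n$; therefore $\rho\, d(z,x_{n+1})\le 2^{-n}$ for all $n$, forcing $z=\hat x$. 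This simultaneously shows that $\hat x$ does minimize $x\mapsto g(x)+\rho\, d(x,\hat x)$ (a strictly better point would produce such a $z\neq\hat x$) and that the minimizer is unique.

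The step I expect to require the most care is the Cauchy estimate in the second paragraph: one must take the near-minimizing choice of $x_{n+1}$ over the \emph{lower section} $S(x_n)$ rather than over all of $\X$, and then combine this with the monotone nesting of the sections so that the bound $\rho\, d(\cdot,x_{n+1})\le 2^{-n}$ genuinely controls the whole tail of the sequence. The closedness of the sections --- hence the role of lower semicontinuity of $g$ --- is the other point not to be skipped, as it is what keeps the limit $\hat x$ inside every $S(x_n)$ and thus below $\bar x$ in the order. (A Zorn's-lemma argument on the same partial order is also possible, but the explicit recursion above is cleaner in a metric space and makes the quantitative bound in conclusion (2) transparent.)
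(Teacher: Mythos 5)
Your proof is correct: the partial order $x\preceq y \iff g(x)+\rho\,d(x,y)\le g(y)$, the recursive near-minimizing selection over the nested closed lower sections, the Cauchy estimate $\rho\,d(\cdot,x_{n+1})\le 2^{-n}$ on $S(x_{n+1})$, and the identification of any $z\preceq\hat x$ with $\hat x$ together give all three conclusions, and the places you flag as delicate (minimizing over $S(x_n)$ rather than $\X$, and closedness of the sections via lower semicontinuity) are exactly the right ones. There is nothing to compare against here: the paper states this result without proof, citing Ekeland's original article, and what you have written is the standard complete argument for it.
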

Notice that property \ref{it:minim} in Ekeland's principle directly implies the inequality $|\nabla g|(\hat x)\leq \rho$. Thus if a point $\bar x$ nearly minimizes $g$, then the slope of $g$ is small at some nearby point.

\subsection{Slope and subdifferentials}
The slope is a purely metric creature. However, for a function $f$ on $\R^n$, the slope  is closely related to ``subdifferentials'', which may be more familiar to the audience. We explain the relationship here following \cite{ioffe_survey}. Since the discussion will not be used in the sequel, the reader can safely skip it and move on to Section~\ref{sec:main}.

A vector $\bar v\in \R^n$ is called a {\em Fr\'{e}chet subgradient} of a function $f\colon\R^n\to\overline\R$ at a point $\bar x$ if the inequality 
$$f(x)\geq f(\bar x)+\langle\bar v,x-\bar x \rangle +o(\|x-\bar x\|)\qquad \textrm{ holds as } x\to\bar x.$$
The set of all Fr\'{e}chet subgradients of $f$ at $\bar x$ is the {\em Fr\'{e}chet subdifferential} and is denoted by $\hat \partial f(\bar x)$. The connection of the slope $|\nabla f|(\bar x)$ to subgradients is immediate. A vector $\bar v$ lies in $\hat \partial f(\bar x)$ if and only if the slope of the linearly tilted function $f(\cdot)-\langle \bar v,\cdot\rangle$ at $\bar x$ is zero. Moreover the inequality
\begin{equation}\label{eqn:ineq_slope_subreg}
|\nabla f|(\bar x)\leq \dist (0,\hat \partial f(\bar x)) \qquad \textrm{  holds}.
\end{equation}
The {\em limiting subdifferential} of $f$ at $\bar x$, denoted $\partial f(\bar x)$, consists of all vectors $\bar v$ such that there exists sequences $x_i$ and $v_i\in\hat\partial f(x_i)$ satisfying
 $(x_,f(x_i),v_i)\to(\bar x, f(\bar x),\bar v)$.
Assuming that $f$ is closed,  a vector $\bar v$ lies in $ \partial f(x)$ if and only if the limiting slope of the linearly tilted function $f(\cdot)-\langle \bar v,\cdot\rangle$ at $\bar x$ is zero. Moreover, Proposition 4.6 in \cite{curves} shows that the exact equality
\begin{equation}\label{eqn:equal_slop_sub}
\overline{|\nabla f|}(\bar x)= \dist (0, \partial f(\bar x)) \qquad \textrm{  holds}.
\end{equation}
In particular, stationarity of $f$ at $\bar x$ amounts to the inclusion $0\in \partial f(\bar x)$.

\section{Main results}\label{sec:main}
For the rest of the paper, fix a closed function $f\colon\X\to\overline{\R}$ on a complete metric space $\X$, and a point $x$ with $f(x)$ finite.
The following theorem is our main result. It shows that for any function $f_x(\cdot)$ (the ``model''),
such that the error in approximation $|f_x(y)-f(y)|$ is controlled by a growth function of 
the norm $d(x,y)$,  the distance between $x$ and the minimizer $x^+$ of $f_x(\cdot)$ prescribes near-stationarity of $f$ at some nearby point $\hat x$. 

% By construction, such a function $f_x$  may be much simpler than $f$, and one can compute its minimizer $x^+$. This idea pervades numerical optimization; see e.g. \cite{} or the recent papers \cite{}. Is there a relationship, between on one hand, the step-length $\|x^+-x\|$ and approximate stationarity $\dist(0,\partial f(\cdot))$ at $x$, $x^+$, or perhaps at some other nearby point $\hat x$?

\begin{theorem}[Perturbation result]\label{thm:main}
Consider a closed function $f_x\colon\X\to\overline\R$ such  that the inequality
$$|f_x(y)-f(y)|\leq \omega(d(x,y)) \quad \textrm{ holds for all }  y\in\mathcal X,$$
where $\omega$ is some growth function, %Then equality
%$$|\nabla f_x|(x)=|\nabla f|(x)\qquad \textrm{ holds}.$$
and let $x^+$ be a minimizer of $f_x$. %If $x$ coincides with $x^+$, then equality $|\nabla f|(x)=0$ holds. On the other hand, if $x$ and $x^+$ are distinct, 
If $x^+$ coincides with $x$, then the slope $|\nabla f|(x)$ is zero. 
On the other hand, if $x$ and $x^+$ are distinct, then 
 there exists a point $\hat x\in \X$ satisfying
\begin{enumerate}
\item {\bf (point proximity)} $\quad d(x^+,\hat{x})\leq 2\cdot \frac{\omega(d(x^+,x))}{\omega'(d(x^+,x))}$,
\item {\bf (value proximity)} $\quad f(\hat x)\leq f(x^+)+\omega(d(x^+,x))$,
\item {\bf (near-stationarity)} $\quad |\nabla f|(\hat x)\leq \omega'(d(x^+,x))+\omega'(d(\hat x,x))$.
\end{enumerate}
\end{theorem}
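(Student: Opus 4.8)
The plan is to apply Ekeland's variational principle (Theorem~\ref{thm:eke}) to a single auxiliary function, the perturbed objective $g(y) := f(y) + \omega(d(x,y))$, which is closed (being the sum of the closed function $f$ and the continuous function $\omega(d(x,\cdot))$) and dominates $f$ everywhere, with $g(x) = f(x)$. First I would dispose of the degenerate case $x^+ = x$: since $x^+ = x$ minimizes $f_x$ and $\omega(0) = 0$, every $y$ satisfies $f(y) \ge f_x(y) - \omega(d(x,y)) \ge f_x(x) - \omega(d(x,y)) = f(x) - \omega(d(x,y))$, hence $(f(x)-f(y))^+/d(x,y) \le \omega(d(x,y))/d(x,y) \to \omega'(0) = 0$ as $y \to x$, so $|\nabla f|(x) = 0$.

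Now assume $x \neq x^+$ and set $r := d(x^+,x) > 0$ (and note $f_x(x^+) = \inf f_x$ is finite, since $f_x(x^+) \le f_x(x) \le f(x) < \infty$; otherwise the statement is vacuous). The first claim is that $x^+$ is a $2\omega(r)$-approximate minimizer of $g$. Indeed, the estimate $f \ge f_x - \omega(d(x,\cdot))$ yields $g(y) \ge f_x(y) \ge \inf f_x = f_x(x^+) \ge f(x^+) - \omega(r)$ for all $y$, so $\inf g \ge f(x^+) - \omega(r)$, whereas $g(x^+) = f(x^+) + \omega(r)$; subtracting gives $g(x^+) \le \inf g + 2\omega(r)$. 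I would then apply Theorem~\ref{thm:eke} with $\epsilon := 2\omega(r)$ and $\rho := \omega'(r) > 0$, obtaining a point $\hat x$ with $g(\hat x) \le g(x^+)$, with $d(x^+,\hat x) \le \epsilon/\rho = 2\omega(r)/\omega'(r)$ (this is point proximity), and such that $\hat x$ uniquely minimizes $y \mapsto g(y) + \rho\, d(y,\hat x)$, whence $|\nabla g|(\hat x) \le \rho = \omega'(r)$ as noted below Theorem~\ref{thm:eke}. Since $f \le g$, we also get $f(\hat x) \le g(\hat x) \le g(x^+) = f(x^+) + \omega(r)$, which is value proximity.

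The remaining, and only delicate, step is transferring the slope bound from $g$ to $f$. Writing $f = g - \omega(d(x,\cdot))$ and using that $\omega$ is nondecreasing (as $\omega' > 0$ on $(0,\infty)$), for every $y$ one has
$$(f(\hat x) - f(y))^+ \le (g(\hat x) - g(y))^+ + \big(\omega(d(x,y)) - \omega(d(x,\hat x))\big)^+ .$$
The last term vanishes unless $d(x,y) > d(x,\hat x)$, in which case $d(\hat x,y) \ge d(x,y) - d(x,\hat x) > 0$ by the triangle inequality, so dividing by $d(\hat x,y)$ bounds that term by $\big(\omega(d(x,y)) - \omega(d(x,\hat x))\big)\big/\big(d(x,y) - d(x,\hat x)\big)$. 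Letting $y \to \hat x$: the first term contributes $|\nabla g|(\hat x)$ to the limsup, while the second tends to $\omega'(d(x,\hat x))$ straight from the definition of the derivative of $\omega$ at $d(x,\hat x)$ — crucially, no continuity of $\omega'$ is required. Hence $|\nabla f|(\hat x) \le |\nabla g|(\hat x) + \omega'(d(\hat x,x)) \le \omega'(r) + \omega'(d(\hat x,x)) = \omega'(d(x^+,x)) + \omega'(d(\hat x,x))$, i.e.\ near-stationarity. I expect this last bookkeeping — the positive-part decomposition together with the one-sided difference quotient of $\omega$ — to be the only real obstacle; everything else is Ekeland's principle applied with the judicious choice $\rho = \omega'(r)$.
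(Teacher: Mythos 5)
Your proposal is correct and follows essentially the same route as the paper: the same auxiliary function $g(y)=f(y)+\omega(d(x,y))$, the same bound $g(x^+)\le\inf g+2\omega(d(x^+,x))$, and Ekeland's principle with the same choices $\epsilon=2\omega(d(x^+,x))$ and $\rho=\omega'(d(x^+,x))$. The only difference is expository: you spell out the slope comparison $|\nabla f|(\hat x)\le|\nabla g|(\hat x)+\omega'(d(\hat x,x))$ via the positive-part decomposition and the one-sided difference quotient, a step the paper dismisses as "an easy argument," and your direct treatment of the case $x^+=x$ replaces the paper's observation that $|\nabla f_x|(x)=|\nabla f|(x)$.
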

\begin{proof}
A quick computation shows the equality $|\nabla f_x|(x)=|\nabla f|(x)$.	Thus if $x^+$ coincides with $x$, the slope $|\nabla f|(x)$ must be zero, as claimed. Therefore, for the remainder of the proof, we will assume that $x^+$ and $x$ are distinct.
 	
Observe now the inequality
$$f(y)\geq f_x(y)-\omega(d(x,y))\geq f_x(x^+)-\omega(d(x,y)).$$ Define  the function $g(y):=f(y)+\omega(d(x,y))$ and note $\inf g \geq f_x(x^+)$. We deduce  
\begin{equation}\label{eqn:eps_close}
g(x^+)-\inf g=f(x^+)-f_x(x^+)+\omega(d(x^+,x))\leq 2\cdot\omega(d(x^+,x)).
\end{equation}
An easy argument now shows the inequality
$$|\nabla g|(z)\geq |\nabla f|(z)-\omega'(d(z,x))\qquad \textrm{for all } z\in \X.$$
%In particular, if $x$ coincides with $x^+$, then by \eqref{eqn:eps_close}, the point $x$ is a global minimizer of $g$, and we conclude $0=|\nabla g|(x)\geq |\nabla f|(x)-\omega'(0) =|\nabla f|(x)$. It follows that the three claimed conditions hold trivially for the point $\hat x=x$.
Setting  $\epsilon:=2\omega(d(x^+,x))$ and applying Ekeland's variational principle (Theorem~\ref{thm:eke}), we obtain for any $\rho>0$ a point $\hat{x}$ satisfying $$g(\hat x)\leq g(x^+),\qquad d(x^+,\hat x)\leq \frac{\epsilon}{\rho}, \qquad \textrm{and }\qquad |\nabla g|(\hat x)\leq \rho.$$
 We conclude $|\nabla f|(\hat x)\leq \rho+\omega'(d(\hat x, x)).$
Setting $\rho:=\omega'(d(x^+,x))$ yields the result.
\end{proof}

Note that the distance $d(\hat x,x)$ appears on the right hand-side of the near-stationarity property. By the triangle-inequality and point proximity, however, it can be upper bounded by $d(x^+,x)+2\cdot \frac{\omega(d(x^+,x))}{\omega'(d(x^+,x))}$, a quantity independent of $\hat x$. 

To better internalize this result, let us look at the most important setting of Theorem~\ref{thm:main} where the growth function is a quadratic
$\omega(t)=\frac{\eta}{2}t^{2}$ for some real $\eta>0$. 
\begin{corollary}[Quadratic error]\label{cor:quad_gr}
Consider a closed function $f_x\colon\X\to\overline\R$ and suppose that with some real $\eta>0$ the inequality
$$|f_x(y)-f(y)|\leq \frac{\eta}{2} \cdot d^{2}(x,y) \quad \textrm{holds for all }  y\in\mathcal X.$$
Define $x^+$ to be the minimizer of $f_x$. Then there exists a point $\hat x\in \R^n$ satisfying
\begin{enumerate}
\item {\bf (point proximity)} $\quad d(x^+,\hat{x})\leq  d(x^+,x)$,
\item {\bf (value proximity)} $\quad f(\hat x)\leq f(x^+)+\frac{\eta}{2}\cdot d^{2}(x^+,x)$,
\item {\bf (near-stationarity)} $\quad |\nabla f|(\hat x)\leq 5\eta \cdot d(x^+,x)$.
\end{enumerate}
\end{corollary}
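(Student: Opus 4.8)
The plan is to derive the corollary as a direct specialization of Theorem~\ref{thm:main} with the quadratic growth function $\omega(t):=\frac{\eta}{2}t^{2}$. First I would check that $\omega$ is indeed a growth function: clearly $\omega(0)=0$, and since $\omega'(t)=\eta t$ we have $\omega'(0)=0$ and $\omega'>0$ on $(0,\infty)$ (in fact $\omega$ is even a proper growth function, though this is not needed here). Consequently the standing hypothesis $|f_x(y)-f(y)|\le \frac{\eta}{2}d^{2}(x,y)$ is precisely the inequality $|f_x(y)-f(y)|\le \omega(d(x,y))$ demanded by Theorem~\ref{thm:main}.

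Next I would dispatch the trivial case. If $x^{+}=x$, then Theorem~\ref{thm:main} gives $|\nabla f|(x)=0$, so the choice $\hat x:=x$ satisfies all three conclusions (the first two reduce to $0\le 0$ and $f(x)\le f(x)$, and near-stationarity holds since the slope vanishes). So assume $x^{+}\ne x$ and let $\hat x$ be the point produced by Theorem~\ref{thm:main}. It then remains only to substitute $\omega(t)=\frac{\eta}{2}t^{2}$ and $\omega'(t)=\eta t$ into the three displayed bounds: point proximity becomes $d(x^{+},\hat x)\le 2\cdot\frac{(\eta/2)\,d^{2}(x^{+},x)}{\eta\, d(x^{+},x)}=d(x^{+},x)$; value proximity becomes $f(\hat x)\le f(x^{+})+\frac{\eta}{2}d^{2}(x^{+},x)$ verbatim; and near-stationarity becomes $|\nabla f|(\hat x)\le \eta\, d(x^{+},x)+\eta\, d(\hat x,x)$.

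The last bound still mentions $\hat x$ on the right, so the final step is to eliminate it using the triangle inequality together with the point proximity estimate just obtained: $d(\hat x,x)\le d(\hat x,x^{+})+d(x^{+},x)\le 2\,d(x^{+},x)$, whence $|\nabla f|(\hat x)\le 3\eta\, d(x^{+},x)\le 5\eta\, d(x^{+},x)$. I expect no genuine obstacle: all the analytic content (the Ekeland argument and the slope perturbation estimate $|\nabla g|(z)\ge |\nabla f|(z)-\omega'(d(z,x))$) is already packaged inside Theorem~\ref{thm:main}, and the corollary is pure bookkeeping. The only latitude is in how generously one bounds $d(\hat x,x)$; the stated constant $5\eta$ is simply a comfortable over-estimate of the sharper value $3\eta$ produced above.
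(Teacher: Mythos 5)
Your proof is correct and is exactly the specialization the paper intends: the corollary is stated without proof, being the direct substitution of $\omega(t)=\frac{\eta}{2}t^{2}$ into Theorem~\ref{thm:main} followed by the triangle-inequality elimination of $d(\hat x,x)$ that the paper's remark after that theorem spells out. Your sharper constant $3\eta$ is in fact the one the paper itself uses later (in the proof of Theorem~\ref{thm:slop_subreg}), so the stated $5\eta$ is, as you say, merely a generous over-estimate.
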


An immediate consequence of Theorem~\ref{thm:main} is the following subsequence convergence result.

\begin{corollary}[Subsequence convergence to stationary points]\label{cor:stat}{\hfill \\ }
Consider a sequence of points $x_k$ and closed functions $f_{x_k}\colon\mathcal X\to\overline{\R}$ satisfying $x_{k+1}=\argmin_y f_{x_k}(y)$ and $d(x_{k+1},x_{k})\to 0$. Suppose moreover that the inequality 
$$|f_{x_k}(y)-f(y)|\leq \omega(d(y,x_k))\qquad \textrm{ holds for all indices } k \textrm{ and points } y\in \mathcal X,$$ 
where $\omega$ is a proper growth function. 
 %$\|x_k-x_{k-1}\|\to 0$. 
If $(x^*,f(x^*))$ is a limit point of the sequence $(x_{k},f(x_k))$, then  $x^*$ is 
stationary for $f$.
\end{corollary}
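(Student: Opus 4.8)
The plan is to feed Theorem~\ref{thm:main} into the iteration: at each step apply it with base point $x:=x_k$, model $f_x:=f_{x_k}$, and minimizer $x^+:=x_{k+1}$, extract the auxiliary points $\hat x_k$, and then show that along a subsequence realizing the limit point $(x^*,f(x^*))$ the points $\hat x_k$ converge to $x^*$ simultaneously in distance, in function value, and in slope. The definition of the limiting slope $\overline{|\nabla f|}$ then forces $\overline{|\nabla f|}(x^*)=0$.

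Concretely, write $t_k:=d(x_{k+1},x_k)$, so $t_k\to 0$ by hypothesis. The approximation hypothesis of Corollary~\ref{cor:stat} is exactly the bound required by Theorem~\ref{thm:main}, so for each $k$ with $x_{k+1}\ne x_k$ we obtain a point $\hat x_k$ with
\[
d(x_{k+1},\hat x_k)\le 2\,\frac{\omega(t_k)}{\omega'(t_k)},\qquad f(\hat x_k)\le f(x_{k+1})+\omega(t_k),\qquad |\nabla f|(\hat x_k)\le \omega'(t_k)+\omega'(d(\hat x_k,x_k)),
\]
and when $x_{k+1}=x_k$ I would simply set $\hat x_k:=x_k$, for which $|\nabla f|(\hat x_k)=0$ and the first and third displayed estimates hold trivially.

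Next I would invoke that $\omega$ is a \emph{proper} growth function. Since $t_k\to 0$, this gives both $\omega(t_k)/\omega'(t_k)\to 0$ and $\omega'(t_k)\to 0$, and it makes $\omega'$ continuous at $0$ with $\omega'(0)=0$. The first limit yields $d(x_{k+1},\hat x_k)\to 0$, whence $d(\hat x_k,x_k)\le d(\hat x_k,x_{k+1})+t_k\to 0$ and therefore $\omega'(d(\hat x_k,x_k))\to 0$; combined with $\omega'(t_k)\to 0$ this gives $|\nabla f|(\hat x_k)\to 0$. So far the locations and slopes are under control for the whole sequence; only the function values need a subsequence.

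Finally, choose indices $k_j$ with $(x_{k_j},f(x_{k_j}))\to(x^*,f(x^*))$. Then $x_{k_j+1}\to x^*$ because $t_{k_j}\to 0$, and hence $\hat x_{k_j}\to x^*$ because $d(\hat x_{k_j},x_{k_j+1})\to 0$. For the values, note that the approximation error vanishes when $y$ equals the base point, so $f_{x_{k_j}}(x_{k_j})=f(x_{k_j})$; since $x_{k_j+1}$ minimizes $f_{x_{k_j}}$, this gives $f(x_{k_j+1})\le f_{x_{k_j}}(x_{k_j+1})+\omega(t_{k_j})\le f(x_{k_j})+\omega(t_{k_j})$, and feeding this into value proximity yields $f(\hat x_{k_j})\le f(x_{k_j})+2\omega(t_{k_j})$, so $\limsup_j f(\hat x_{k_j})\le f(x^*)$. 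Lower semicontinuity of $f$ at $x^*$ gives the reverse inequality $\liminf_j f(\hat x_{k_j})\ge f(x^*)$, so $f(\hat x_{k_j})\to f(x^*)$. Thus $\hat x_{k_j}\to x^*$, $f(\hat x_{k_j})\to f(x^*)$, and $|\nabla f|(\hat x_{k_j})\to 0$, so directly from the definition $\overline{|\nabla f|}(x^*)\le\liminf_j|\nabla f|(\hat x_{k_j})=0$, i.e.\ $x^*$ is stationary. The one step that genuinely requires care — the main obstacle — is this value-convergence argument: one must marry the Ekeland-type value proximity of Theorem~\ref{thm:main} with the descent-type inequality $f(x_{k+1})\le f(x_k)+\omega(t_k)$ and with lower semicontinuity of $f$ in order to pin $f(\hat x_{k_j})$ to $f(x^*)$; the rest is bookkeeping with the properness of $\omega$.
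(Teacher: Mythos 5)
Your proof is correct and follows essentially the same route as the paper: apply Theorem~\ref{thm:main} along the iteration, use properness of $\omega$ to kill the point-proximity and near-stationarity bounds, and combine value proximity with lower semicontinuity to pin $f(\hat x_{k_j})$ to $f(x^*)$. The only difference is bookkeeping: the paper indexes so that the subsequence points are the \emph{minimizers} ($x^+=x_{k_i}$, base point $x_{k_i-1}$), which makes value proximity refer directly to $f(x_{k_i})\to f(x^*)$, whereas you keep the subsequence as base points and bridge to $f(x_{k_j+1})$ via the (correct) descent inequality $f(x_{k+1})\le f(x_k)+\omega(t_k)$.
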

\begin{proof}
Fix a subsequence $x_{k_i}$ with $(x_{k_i},f(x_{k_i}))\to (x^*,f(x^*))$, and
consider the points $\hat x_{k_i}$ guaranteed to exist by Theorem~\ref{thm:main}. %Notice that $x_{k_i-1}$ converge to $x^*$ as well.
By  point proximity,  $d(x_{k_i},\hat x_{k_i-1})\leq \frac{\omega(d(x_{k_i},x_{k_i-1}))}{\omega'(d(x_{k_i},x_{k_i-1}))}$, and the fact that the right hand-side tends to zero, we conclude that $\hat x_{k_{i}-1}$ converge to $x^*$. The functional proximity,
$f(\hat x_{k_i-1})\leq f(x_{k_i})+\omega(d(x_{k_i},x_{k_i-1}))$ implies $\ls_{i\to\infty} f(\hat x_{k_i-1})\leq \ls_{i\to\infty} f(x_{k_i})=f(x^*)$. Lower-semicontinuity of $f$ then implies $\lim_{i\to\infty} f(\hat x_{k_i-1})=f(x^*)$. Finally, the near-stationarity, $$|\nabla f|(\hat x_{k_i-1})\leq \omega'(d(x_{k_i}, x_{k_i-1}))+\omega'(d(\hat x_{k_i-1},x_{k_i-1})),$$ implies $|\nabla f|(\hat x_{k_i-1})\to 0$. Thus $x^*$ is a stationary point of $f$.
\end{proof}

\begin{remark}[Asymptotic convergence to critical points]
Corollary~\ref{cor:stat} proves something stronger than stated. An unbounded sequence $z_k$ is {\em asymptotically critical} for $f$ it satisfies $ |\nabla f|(z_k)\to 0$. The proof of Corollary~\ref{cor:stat} shows that
if the sequence $x_k$ is unbounded, then there exists an asymptotically critical sequence $z_k$ satisfying $d(x_k,z_k)\to 0$.
\end{remark}

Corollary~\ref{cor:stat} is fairly satisfying since very little is assumed about the model functions. More sophisticated linear, or faster, rates of convergence rely on some regularity of the function $f$ near a limit point $x^*$ of the iterate sequence $x_k$. Let $S$ denote the set of stationary points of $f$. One of the weakest such regularity assumptions is that the slope $|\nabla f|(x)$ linearly bounds the distance $\dist(x;S)$ for all $x$ near $x^*$. Indeed, this property, which we call the {\em slope error-bound}, always entails a classical quadratic growth condition away from $S$ (see \cite{crit_semi,tilt_other}), and is equivalent to it whenever $f$ is a convex function on $\R^n$ (see \cite{fran_sub,klat_kum_book}).

Assuming such regularity, a typical convergence analysis strategy thoroughly explored by  Luo-Tseng \cite{error_conv}, aims to deduce that the step-sizes
$d(x_{k+1},x_k)$ linearly bound the distance $\dist(x_k;S)$. The latter is called the {\em step-size error-bound property}. We now show that slope error-bound always implies the step-size error-bound, under the mild and natural assumption that the models $f_{x_k}$ deviate form $f$ by a quadratic error in the distance.

\begin{theorem}[Slope and step-size error-bounds]\label{thm:slop_subreg}
Let $S$ be an arbitrary set and fix a point $x^*\in S$ satisfying the condition
 \begin{itemize}
\item {\bf(Slope error-bound)}$\quad \dist(x;S)\leq L\cdot |\nabla f|(x)\quad \text{ for all }\quad x\in {\bf B}_{\gamma}(x^*)$.
\end{itemize}
Consider a closed function $f_{x}\colon\mathcal X\to\overline{\R}$ and suppose that for some $\eta>0$ the  inequality 
$$|f_{x}(y)-f(y)|\leq \frac{\eta}{2}d^2(y,x)\qquad \textrm{ holds for all }  y\in \mathcal X.$$ 
%where $\omega$ is a superlinear growth function.
 %$\|x_k-x_{k-1}\|\to 0$. 
Then letting $x^+$ be any minimizer of $f_x$, the following holds:
\begin{itemize}
\item {\bf(Step-size error-bound)}$\quad \dist(x,S)\leq (3L\eta+2)\cdot d(x^+,x)\quad \text{ when }\quad x,x^+\in {\bf B}_{\gamma/3}(x^*).$
\end{itemize}
\end{theorem}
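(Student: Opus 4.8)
The plan is to feed the conclusion of Corollary~\ref{cor:quad_gr} into the slope error-bound hypothesis. Corollary~\ref{cor:quad_gr} (the quadratic-error specialization of Theorem~\ref{thm:main}) produces a point $\hat x$ with $d(x^+,\hat x)\le d(x^+,x)$, $f(\hat x)\le f(x^+)+\frac{\eta}{2}d^2(x^+,x)$ — which we will not need — and, crucially, $|\nabla f|(\hat x)\le 5\eta\cdot d(x^+,x)$. The one thing that must be checked before invoking the slope error-bound at $\hat x$ is that $\hat x$ actually lies in ${\bf B}_\gamma(x^*)$; this is exactly the reason the theorem restricts $x,x^+$ to the smaller ball ${\bf B}_{\gamma/3}(x^*)$.

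First I would dispose of the trivial case $x^+=x$: then $|\nabla f|(x)=0$ by Theorem~\ref{thm:main}, so the slope error-bound forces $\dist(x;S)=0$ and the step-size error-bound holds vacuously (both sides zero). So assume $x\ne x^+$ and apply Corollary~\ref{cor:quad_gr} to get $\hat x$. Next, the containment: since $x,x^+\in{\bf B}_{\gamma/3}(x^*)$ we have $d(x^+,x^*)<\gamma/3$ and $d(x^+,x)\le d(x^+,x^*)+d(x,x^*)<2\gamma/3$; combining with point proximity, $d(\hat x,x^*)\le d(\hat x,x^+)+d(x^+,x^*)\le d(x^+,x)+d(x^+,x^*)<2\gamma/3+\gamma/3=\gamma$, so $\hat x\in{\bf B}_\gamma(x^*)$ and the slope error-bound applies at $\hat x$.

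Then I would just chain the estimates. From the slope error-bound at $\hat x$ and near-stationarity, $\dist(\hat x;S)\le L\,|\nabla f|(\hat x)\le 5L\eta\,d(x^+,x)$. By the triangle inequality for the distance-to-a-set function, $\dist(x;S)\le d(x,x^+)+d(x^+,\hat x)+\dist(\hat x;S)\le d(x^+,x)+d(x^+,x)+5L\eta\,d(x^+,x)=(5L\eta+2)\,d(x^+,x)$, using point proximity for the middle term. This already gives a step-size error-bound; to match the stated constant $(3L\eta+2)$ one should instead route the triangle inequality through $x^+$ only once — namely $\dist(x;S)\le d(x,x^+)+\dist(x^+;S)$ and then bound $\dist(x^+;S)\le d(x^+,\hat x)+\dist(\hat x;S)$, and more carefully track that the $5\eta$ in Corollary~\ref{cor:quad_gr} is loose: redoing Theorem~\ref{thm:main} directly with $\omega(t)=\frac{\eta}{2}t^2$ gives $|\nabla f|(\hat x)\le \eta d(x^+,x)+\eta d(\hat x,x)\le \eta d(x^+,x)+\eta\bigl(d(\hat x,x^+)+d(x^+,x)\bigr)\le 3\eta\,d(x^+,x)$, whence $\dist(\hat x;S)\le 3L\eta\,d(x^+,x)$ and $\dist(x;S)\le d(x^+,x)+d(x^+,\hat x)+\dist(\hat x;S)\le (3L\eta+2)\,d(x^+,x)$.

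**The main obstacle** is not any single hard inequality — each step is a one-line triangle-inequality estimate — but bookkeeping: keeping the radii consistent so that $\hat x$ provably sits inside ${\bf B}_\gamma(x^*)$ (this is what forces the factor-of-$3$ shrinkage of the ball and is the only place the geometry really bites), and being careful to use the sharp bound $|\nabla f|(\hat x)\le 3\eta\,d(x^+,x)$ coming straight from Theorem~\ref{thm:main}'s near-stationarity clause rather than the cruder constant $5\eta$ recorded in Corollary~\ref{cor:quad_gr}, so that the advertised constant $3L\eta+2$ comes out exactly.
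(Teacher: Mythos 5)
Your proposal is correct and follows essentially the same route as the paper: apply Corollary~\ref{cor:quad_gr} to produce $\hat x$, verify $d(\hat x,x^*)\leq d(x^+,x)+d(x^+,x^*)<\gamma$ so the slope error-bound applies at $\hat x$, and chain the triangle inequality with the sharp near-stationarity bound $|\nabla f|(\hat x)\leq 3\eta\, d(x^+,x)$. Your observation that the constant $5\eta$ recorded in Corollary~\ref{cor:quad_gr} is loose and must be replaced by the $3\eta$ coming directly from Theorem~\ref{thm:main} is exactly right --- the paper's own proof silently uses $3\eta$ at this step.
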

\begin{proof}
Suppose that the points $x$ and $x^+$ lie in ${\bf B}_{\gamma/3}(x^*)$. Let $\hat x$ be the point guaranteed to exist by Corollary~\ref{cor:quad_gr}. We deduce 
\begin{align*}
d(\hat x,x^*)&\leq d(\hat x,x^+)+d(x^+,x^*)\leq d(x^+,x)+d(x^+,x^*)<\gamma.
\end{align*}
	Thus $\hat x$ lies in ${\bf B}_{\gamma}(x^*)$ and we obtain 
	\begin{align*}
	L\cdot |\nabla f|(\hat x)\geq \dist\left(\hat x; S\right)&\geq \dist\left(x; S\right)-d(x^+,\hat x)-d(x^+,x)\\
	&\geq  \dist\left(x; S\right)-2d(x^+,x).
	\end{align*}
	Taking into account the inequality $|\nabla f|(\hat x)\leq 3\eta\cdot d(x^+,x)$, we conclude
$$\dist\left(x; S\right)\leq (3L\eta+2)\cdot d(x^+,x),$$ as claimed.
\end{proof}

\begin{remark}[Slope and subdifferential error-bounds]
It is instructive to put the slope error-bound property in perspective for those more familiar with subdifferentials. To this end, suppose that $f$ is defined on $\R^n$ and consider the {\em subdifferential error-bound} condition
\begin{equation}\label{eqn:subdiff_subreg}
\dist(x;S)\leq L\cdot \dist(0;\hat\partial f(x))\quad \text{ for all }\quad x\in {\bf B}_{\gamma}(x^*). 
\end{equation}
Clearly in light of the inequality \eqref{eqn:ineq_slope_subreg}, the slope error-bound implies the subdifferential error-bound \eqref{eqn:subdiff_subreg}. Indeed, the slope  and subdifferential error-bounds are equivalent. To see this, suppose \eqref{eqn:subdiff_subreg} holds and consider an arbitrary point $x\in {\bf B}_{\gamma}(x^*)$. Appealing to the equality \eqref{eqn:equal_slop_sub}, we obtain sequences $x_i$ and $v_i\in \hat\partial f(x_i)$  satisfying $x_i\to x$ and $\|v_i\|\to \overline{|\nabla f|}(x)$. Inequality
\eqref{eqn:subdiff_subreg} then implies $\dist(x_i;S)\leq L\cdot \|v_i\|$ for each sufficiently large index $i$. Letting $i$ tend to infinity yields the inequality,
$\dist(x;S)\leq L\cdot \overline{|\nabla f|}(x)\leq L\cdot |\nabla f|(x)$, and therefore the slope error-bound is valid.
\end{remark}

\smallskip

Lately, a different condition called the {\em Kurdyka-{\L}osiewicz inequality} \cite{Lewis-Clarke,Kurd} with exponent 1/2 has been often used to study linear rates of convergence. The manuscripts \cite{desc_semi,prox_lin_nonconv} are influential examples. We finish the section with the observation that the  Kurdyka-{\L}osiewicz inequality always implies the slope error-bound relative to a sublevel set $S$; that is, the K{\L} inequality is no more general than the slope error-bound. A different argument for (semi) convex functions based on subgradient flow appears in \cite[Theorem 5]{bolte_complex_kur} and \cite{pl_ineq}.
In Proposition~\ref{prop:conv_kurd_loj} we will also observe that the converse implication holds for all prox-regular functions. Henceforth, we will use the sublevel set notation $[f\leq b]:=\{x:f(x)\leq b\}$ and similarly $[a<f< b]:=\{x: a< f(x)< b\}$.

\begin{theorem}[K{\L}-inequality implies the slope error-bound]\label{thm:slope_kl_ineq}
%Consider a continuous function $\varphi\colon [0,r)\to \R_{++}$ with $\varphi(0)=0$, that is $C^1$-smooth on  $(0,r)$ and satisfies $\varphi'>0$. 
Suppose that there is a nonempty open set $\mathcal{U}$ in $\mathcal{X}$ such that the inequalities
$$(f(x)-f^*)^{\theta}\leq \alpha\cdot  |\nabla f(x)|\qquad \textrm{hold for all }\qquad x\in \mathcal U\cap [f^*<f<r],$$
where $\theta\in (0,1)$, $\alpha>0$,  $f^*$, and $r>f^*$ are real numbers.
Then there exists a nonempty  open set $\widehat{ \mathcal U}$ and a real number $\hat r$ so that the inequalities
$$d(x;[f\leq f^*])\leq \frac{\alpha^{\theta^{-1}}}{1-\theta} \cdot |\nabla f|^{\frac{1-\theta}{\theta}}(x)\qquad \textrm{ hold for all }\qquad x\in \widehat{\mathcal{U}}\cap [f^*<f<\hat r].$$
 In the case $\mathcal U=\mathcal X$, we can ensure $\widehat{\mathcal U}=\mathcal X$ and $\hat r=r$. 
\end{theorem}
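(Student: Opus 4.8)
The plan is to integrate the Kurdyka--{\L}ojasiewicz inequality along decreasing paths, exactly as in the classical {\L}ojasiewicz argument, but phrased purely metrically in terms of the slope. The key device is the elementary fact that if $f$ is closed and bounded below near a point, then for the function $\varphi(x) := \frac{\alpha}{1-\theta}(f(x)-f^*)^{1-\theta}$ (a $C^1$ reparametrization of the value, well-defined and monotone on $[f^*<f<r]$), the K{\L} inequality rewrites as $|\nabla \varphi|(x) \geq 1$ on the relevant region, since $|\nabla \varphi|(x) = \alpha (f(x)-f^*)^{-\theta}|\nabla f|(x) \geq 1$ by hypothesis. So I would first record this chain-rule-type identity for the slope of a composition with an increasing smooth scalar function.

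Next I would invoke Ekeland's principle / the metric regularity content already available. Concretely: fix $x \in \mathcal U \cap [f^* < f < \hat r]$ where $\hat r$ and $\widehat{\mathcal U}$ are chosen small enough (a ball around a base point, shrunk so that the sublevel region stays inside $\mathcal U$ and so the path below cannot escape). Apply Ekeland to $\varphi$ on its domain with $\epsilon = \varphi(x) - \inf \varphi \leq \varphi(x)$ and $\rho$ slightly below $1$: this yields a point $\hat x$ with $d(x,\hat x) \leq \varphi(x)/\rho$ and $|\nabla \varphi|(\hat x) \leq \rho < 1$. But on $[f^*<f<r]$ we have $|\nabla\varphi| \geq 1$, so $\hat x$ cannot lie in that region; hence $f(\hat x) \le f^*$, i.e.\ $\hat x \in [f\le f^*]$ (using that $\varphi(\hat x)\le\varphi(x)$ keeps $\hat x$ in the closure of the sublevel set, and closedness of $f$). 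Therefore
\[
d(x;[f\le f^*]) \;\le\; d(x,\hat x) \;\le\; \frac{\varphi(x)}{\rho} \;=\; \frac{1}{\rho}\cdot\frac{\alpha}{1-\theta}(f(x)-f^*)^{1-\theta},
\]
and letting $\rho \uparrow 1$ gives $d(x;[f\le f^*]) \le \frac{\alpha}{1-\theta}(f(x)-f^*)^{1-\theta}$. Finally feed the K{\L} inequality $(f(x)-f^*)^{\theta}\le\alpha|\nabla f|(x)$ back in, i.e.\ $(f(x)-f^*)^{1-\theta} = \big((f(x)-f^*)^{\theta}\big)^{(1-\theta)/\theta} \le \alpha^{(1-\theta)/\theta}|\nabla f|(x)^{(1-\theta)/\theta}$, to land on
\[
d(x;[f\le f^*]) \;\le\; \frac{\alpha^{1+\frac{1-\theta}{\theta}}}{1-\theta}\,|\nabla f|(x)^{\frac{1-\theta}{\theta}} \;=\; \frac{\alpha^{\theta^{-1}}}{1-\theta}\,|\nabla f|(x)^{\frac{1-\theta}{\theta}},
\]
which is exactly the claimed bound.

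The localization bookkeeping is the step that needs care, and is the main obstacle: Ekeland's principle requires $\varphi$ to be closed and bounded below on the set where it is applied, so I must restrict to a region of the form $\widehat{\mathcal U}\cap[f^*<f<\hat r]$ with $\widehat{\mathcal U}$ a ball $\mathbf B_{\gamma'}(x_0)$ small enough that (i) it sits inside $\mathcal U$, (ii) the Ekeland radius $\varphi(x)/\rho$ — which is small when $f(x)$ is close to $f^*$, hence the need to also shrink $\hat r$ — does not push $\hat x$ out of $\mathcal U$, and (iii) $f\ge f^*$ is maintained so that $\varphi$ is genuinely bounded below by $0$ there. Picking $\hat r$ with $\frac{\alpha}{1-\theta}(\hat r - f^*)^{1-\theta} < \gamma'$ and $\widehat{\mathcal U} = \mathbf B_{\gamma'/2}(x_0)$ for a suitable $x_0\in\mathcal U$ and $\gamma'$ handles all three. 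In the global case $\mathcal U = \mathcal X$, none of this is needed: $\varphi$ is closed and bounded below on all of $[f^*<f<r]$ (extend by $0$ on $[f\le f^*]$), one applies Ekeland directly, and the argument shows $\hat x\in[f\le f^*]$ without any escape worry, so $\widehat{\mathcal U}=\mathcal X$ and $\hat r = r$.
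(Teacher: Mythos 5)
Your proof is correct and takes essentially the same route as the paper: both arguments desingularize $f$ via the concave power $(\max\{0,f-f^*\})^{1-\theta}$, use the K{\L} inequality to bound the slope of this reparametrized function below by a positive constant on $\mathcal U\cap[f^*<f<r]$, and then convert that uniform slope bound into a distance estimate to $[f\le f^*]$ --- the paper by citing a standard descent lemma, you by reproving that lemma inline with Ekeland's principle. The only point to tidy is the chain-rule step: $|\nabla\varphi|(x)=\alpha(f(x)-f^*)^{-\theta}|\nabla f|(x)$ is not an identity in general, but the one-sided bound $|\nabla\varphi|(x)\ge 1$ that you actually need does hold (treating separately the trivial case where $x$ is a limit of points of $[f\le f^*]$, in which case $|\nabla\varphi|(x)=+\infty$ and $\dist(x;[f\le f^*])=0$).
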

\begin{proof}
	Define the function $g(x)=(\max\{0,f(x)-f^*\})^{1-\theta}$. Note the inequality
	$|\nabla g|(x)\geq \frac{1-\theta}{\alpha}$ for all  $x\in \mathcal U\cap [f^*<f<r]$.
Let $R>0$ be strictly smaller than the largest radius of a ball contained in $\mathcal{U}$ and define $\varepsilon:=\min\left\{r-f^*,\frac{(1-\theta)R}{\alpha}\right\}$. 	
Define the nonempty set $\widehat{\mathcal{U}}:=\{x\in \mathcal{U}: {\bf B}_{R}(x)\subseteq \mathcal{U}\}$ and fix a point $x\in \widehat{\mathcal{U}}\cap [f^*<f<f^*+\varepsilon]$. 

Observe now for any point $u\in [f^*<f<f^*+\varepsilon]$ with $d(x,u)\leq R$, the inclusion $u\in \mathcal{U}\cap [f^*<f<r]$ holds, and hence $|\nabla g|(u)\geq \frac{1-\theta}{\alpha}$. Appealing to \cite[Lemma 2.5]{curves} (or \cite[Chapter 1, Basic Lemma]{ioffe_survey}), we deduce the estimate 
$$d(x;[f\leq f^*])\leq \frac{\alpha}{1-\theta}\cdot  g(x)=\frac{\alpha}{1-\theta}\cdot (f(x)-f^*)^{1-\theta}\leq \frac{\alpha^{\theta^{-1}}}{1-\theta} \cdot(|\nabla f|(x))^{\frac{1-\theta}{\theta}}.$$ The proof is complete. %Taking into account $\frac{1}{\varphi'(f(x))}\leq |\nabla f|(x)$
\end{proof}

The converse of Theorem~\ref{thm:slope_kl_ineq} holds for ``prox-regular functions'' on $\R^n$, and in particular for ``lower-$C^2$ functions''. The latter are functions $f$ on $\R^n$ such that around each point there is a neighborhood $\mathcal U$ and a real $l>0$ such that $f+\frac{l}{2}\|\cdot\|^2$ is convex on $\mathcal U$ .

\begin{proposition}[Slope error-bound implies  K\L-inequality under prox-regularity]\label{prop:conv_kurd_loj} {\hfill \\ }
	Consider a closed function $f\colon\R^n\to\overline{\R}$. Fix a real number $f^*$ and a nonempty set $S\subseteq [f\leq f^*]$. Suppose that there is a set $\mathcal{U}$, and constants $L$, $l$, $\epsilon$, and $r>f^*$ such that
	the inequalities
	$$f(y)\geq f(x)+\langle v,y-x\rangle-\frac{l}{2}\|y-x\|^2,$$
	$$\dist(x;S)\leq L \cdot \dist(0;\partial f(x)),$$
	hold for all $x\in \mathcal{U}\cap [f^*<f<r]$, $y\in \mathcal{X}$, and $v\in \partial f(x)\cap{\bf B}_{\epsilon}(0)$. Then the inequalities
	$$\sqrt{f(x)-f^*}\leq \sqrt{L+lL^2/2}\cdot \dist(0;\partial f(x)),$$	
	hold for all $x\in \mathcal{U}\cap [f^*<f<\hat r]$ where we set $\hat r:=\min\{r,(L+lL^2/2)\epsilon^2\}$.
\end{proposition}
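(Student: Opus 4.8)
The plan is to fix an arbitrary point $x\in\mathcal{U}\cap[f^*<f<\hat r]$ and argue directly, using the minimal-norm subgradient of $f$ at $x$. If $\partial f(x)=\emptyset$, then $\dist(0;\partial f(x))=+\infty$ and there is nothing to prove; otherwise, since the limiting subdifferential is closed-valued, pick $v\in\partial f(x)$ with $\|v\|=\dist(0;\partial f(x))$. Note that $f(x)<\hat r\leq r$, so $x$ lies in $\mathcal{U}\cap[f^*<f<r]$ and both hypotheses of the proposition are available at $x$. If $\|v\|\geq\epsilon$, we are already done: the definition of $\hat r$ gives $f(x)-f^*<\bigl(L+\tfrac{l}{2}L^2\bigr)\epsilon^2\leq\bigl(L+\tfrac{l}{2}L^2\bigr)\|v\|^2$, and taking square roots yields the claim.

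Assume henceforth $\|v\|<\epsilon$, so that $v\in\partial f(x)\cap{\bf B}_\epsilon(0)$ and the prox-regularity inequality applies with this $v$. Using the subdifferential error-bound, choose for each $\delta>0$ a point $s_\delta\in S$ with $\|x-s_\delta\|\leq\dist(x;S)+\delta\leq L\|v\|+\delta$; since $S\subseteq[f\leq f^*]$, we have $f(s_\delta)\leq f^*$. Substituting $y=s_\delta$ into the prox-regularity inequality and applying Cauchy--Schwarz,
\[
f^*\ \geq\ f(s_\delta)\ \geq\ f(x)+\langle v,\,s_\delta-x\rangle-\tfrac{l}{2}\|s_\delta-x\|^2\ \geq\ f(x)-\|v\|\,\|s_\delta-x\|-\tfrac{l}{2}\|s_\delta-x\|^2 .
\]
Since $t\mapsto\|v\|\,t+\tfrac{l}{2}t^2$ is nondecreasing on $[0,\infty)$, letting $\delta\downarrow 0$ and using $\dist(x;S)\leq L\|v\|$ gives
\[
f(x)-f^*\ \leq\ \|v\|\cdot L\|v\|+\tfrac{l}{2}L^2\|v\|^2\ =\ \bigl(L+\tfrac{l}{2}L^2\bigr)\|v\|^2 ,
\]
and taking square roots completes the proof.

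The argument is elementary, so there is no deep obstacle; a few points need attention. First, the case split is genuinely needed, because the prox-regularity inequality is assumed only for subgradients in ${\bf B}_\epsilon(0)$, and the role of the threshold $\hat r$ is precisely to render the complementary case $\|v\|\geq\epsilon$ vacuous. Second, $S$ is not assumed closed, so one must realize $\dist(x;S)$ through the approximating points $s_\delta$ and pass to the limit rather than fixing a single nearest point of $S$. Finally, it is worth recording that $0\notin\partial f(x)$ throughout the region of interest — otherwise the error-bound would force $\dist(x;S)=0$, hence $x\in[f\leq f^*]$ by closedness of $f$, contradicting $f(x)>f^*$ — so the right-hand side of the claimed inequality is automatically strictly positive.
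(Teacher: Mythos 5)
Your proof is correct and follows essentially the same route as the paper's: split on whether the minimal-norm subgradient has norm at least $\epsilon$ (where the definition of $\hat r$ makes the bound trivial), and otherwise plug a (near-)closest point of $S$ into the prox-regularity inequality and invoke the subdifferential error-bound. Your version is in fact slightly more careful than the paper's, which tacitly assumes $\dist(x;S)$ is attained; your use of approximating points $s_\delta$ and the monotonicity of $t\mapsto \|v\|t+\tfrac{l}{2}t^2$ closes that small gap.
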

\begin{proof}
	Consider a point $x \in \mathcal{U}\cap [f^*<f<\hat r]$. Suppose first $\dist(0;\partial f(x))\geq\epsilon$. Then we deduce $\sqrt{f(x)-f^*}\leq \sqrt{\hat r}\leq  \sqrt{L+lL^2/2} \cdot \epsilon\leq \sqrt{L+lL^2/2}\cdot \dist(0;\partial f(x))$, as claimed. Hence we may suppose there exists a subgradient $v\in \partial f(x)\cap{\bf B}_{\epsilon}(0)$. We deduce
	$$f^*\geq f(y)\geq f(x)+\langle v,y-x\rangle-\frac{l}{2}\|y-x\|^2\geq f(x)-\|v\|\cdot\|y-x\|-\frac{l}{2}\|y-x\|^2.$$ 
	Choosing $v$, $y$ such that $\|v\|$ and $\|y-x\|$ attain $\dist(0;\partial f(x))$ and 
	$\dist(x;S)$, respectively, we deduce
	$f(x)-f^*\leq \left(L+\frac{lL^2}{2}\right)\cdot\dist^2(0;\partial f(x))$. The result follows. 
\end{proof}

\section{Illustration: convex composite minimization}\label{sec:conc_comp_ill}
In this section, we briefly illustrate the results of the previous section in the context of composite minimization, and recall some consequences already derived in \cite{error_bound_d_lewis} from preliminary versions of the material presented in the current paper. This section will not be used in the rest of the paper, and so the reader can safely skip it if needed. 

The notation and much of discussion follows that set out in \cite{error_bound_d_lewis}. Consider the minimization problem
$$\min_x~ f(x):=g(x)+h(c(x)),$$
where $g\colon\R^n\to\overline \R$ is a closed convex function, $h\colon\R^m\to\R$ is a finite-valued $l$-Lipschitz convex function, and $c\colon\R^n\to\R^m$ is a $C^1$-smooth map with the Jacobian $\nabla c(\cdot)$ that is $\beta$-Lipschitz continuous.  Define the model function 
$$f_x(y):=g(y)+h\Big(c(x)+\nabla c(x)(y-x)\Big)+\frac{l\beta}{2}\|y-x\|^2.$$
One can readily verify the inequality
$$0\leq f_x(y)-f(y)\leq \frac{l\beta}{2}\|y-x\|^2\qquad \textrm{ for all }x,y\in \R^n.$$
In particular, the models $f_x$ are ``Taylor-like''. The prox-linear algorithm iterates the steps
\begin{equation}\label{eqn:iterate}
x_{k+1}=\argmin_y~ f_{x_k}(y).
\end{equation}
%Define the scaled step-size 
The following is a rudimentary convergence guarantee of the scheme \cite[Section 5]{error_bound_d_lewis}:
\begin{equation}\label{eqn:rate_prox_lin}
\sum^k_{i=1} \|x_{i+1}-x_i\|^2\leq \frac{2(f(x_1)-f^*)}{l\beta},
\end{equation}
where $f^*$ is the limit of the decreasing sequence $\{f(x_k)\}$. In particular, the step-sizes $\|x_{i+1}-x_i\|$ tend to zero. Moreover, one can readily verify that for any limit point $x^*$ of the iterate sequence $x_k$, equality $f^*=f(x^*)$ holds. Consequently, by Corollary~\ref{cor:stat}, the point $x^*$ is stationary for $f$:
$$0\in \partial f(x^*)=\partial g(x^*)+\nabla c(x^*)^T\partial h(c(x^*)).$$
We note that stationarity of the limit point $x^*$ is well-known and can be proved by other means; see for example the discussion in \cite{composite_cart}.
From \eqref{eqn:rate_prox_lin}, one also concludes the rate
$$\min_{i=1,\ldots,k} \|x_{i+1}-x_i\|^2\leq \frac{2(f(x_1)-f^*)}{l\beta\cdot k}.$$
What is the relationship of this rate to near-stationary of the iterate $x_k$? Corollary~\ref{cor:quad_gr} shows that 
after $\frac{2l\beta(f(x_1)-f^*)}{25\cdot \epsilon^2}$ iterations, one is guaranteed to find an iterate $x_k$ such that there exists a point $\hat x$ satisfying 
%$$\max\left\{\|\hat x-x_{i+1}\|,\sqrt{\frac{2}{L\beta}(f(\hat x)-f(x_{i+1}))^+}, \frac{\dist(0;\partial f(\hat x))}{5L\beta} \right\}\leq \epsilon.$$
%$$\max\left\{5L\beta\cdot\|\hat x-x_{i+1}\|,~5\sqrt{2L\beta\cdot (f(\hat x)-f(x_{i+1}))^+},~ \dist(0;\partial f(\hat x)) \right\}\leq \epsilon.$$
$$\left.\begin{array}{c}
5l\beta\cdot\|\hat x-x_{k+1}\|\\
5\sqrt{2l\beta}\cdot\sqrt{(f(\hat x)-f(x_{k+1}))^+}\\
\dist(0;\partial f(\hat x))
\end{array}\right\}~\leq~ \epsilon.
$$

Let us now move on to linear rates of convergence. Fix a limit point $x^*$ of the iterate sequence $x_k$ and let $S$ be the set of stationary points of $f$. Then Theorem~\ref{thm:slop_subreg} shows that the regularity condition
\begin{itemize}
	\item {\bf(Slope error-bound)} $\quad \dist(x;S)\leq L\cdot \dist(0;\partial f(x))\quad \text{ for all }\quad x\in {\bf B}_{\gamma}(x^*)$.
\end{itemize}
implies 
\begin{itemize}
	\item {\bf(Step-size error-bound)}$\quad \dist(x_k,S)\leq (3l L\beta+2)\cdot \|x_{k+1}-x_k\|\quad \text{ when }\quad x_k,x_{k+1}\in {\bf B}_{\gamma/3}(x^*).$
\end{itemize}
Additionally, in the next section (Corollary~\ref{cor:func_error}) we will show that the slope error-bound also implies 
\begin{itemize}
	\item {\bf(Model error-bound)}$$\dist(x_k;S)\leq \left(L\sqrt{12l \beta}+\frac{2}{\sqrt{3\eta}}\right)\cdot \sqrt{f(x_k)-\inf f_{x_k}},$$ whenever $f(x_k)-\inf f_{x_k}< \frac{3l\beta\gamma^2}{16}$ and $x_k$ lies in ${\bf B}_{\gamma/2}(x^*)$.
\end{itemize}

It was, in fact, proved in \cite[Theorem 5.10]{error_bound_d_lewis} that the slope and step-size error bounds are equivalent up to a change of constants. Moreover, as advertised in the introduction, the above implications were used in \cite[Theorem 5.5]{error_bound_d_lewis} to show that if the slope error-bound holds then the function values converge linearly:
$$f(x_{k+1})-f^*\leq q (f(x_k)-f^*) \qquad \textrm{ for all large }k,$$ 
where 
$$q\approx 1-\frac{1}{(L\beta l)^2}.$$ 
The rate improves to $q\approx 1-\frac{1}{L\beta l}$ under a stronger regularity condition, called tilt-stability \cite[Theorem 6.3]{error_bound_d_lewis}; the argument of the better rate again crucially employs a comparison of step-lengths and subgradients at near-by points. 

Our underlying assumption is that the models $f_{x_{k}}$ are easy to minimize, by an interior point method for example. This assumption may not be realistic in some large-scale applications. Instead, one must solve the subproblems \eqref{eqn:iterate} inexactly by a first-order method. How should one choose the accuracy?

Fix a sequence of tolerances $\varepsilon_k>0$ and suppose that each iterate $x_{k+1}$ satisfies $$f_{x_k}(x_{k+1})\leq \varepsilon_k+\inf f_{x_k}.$$
Then one can establish the estimate
$$\min_{i=1,\ldots,k} \|\bar{x}_{i+1}-x_i\|^2\leq \frac{2(f(x_1)-f^*)+\sum_{i=1}^k \varepsilon_k}{l\beta\cdot k},$$
where $\bar{x}_{i+1}$ is the true minimizer of $f_{x_i}$. The details will appear in a forthcoming paper. Solving \eqref{eqn:iterate} to $\varepsilon_k$ accuracy can be done in multiple ways using a saddle point reformulation. We treat the simplest case here, where $h$ is a support function $h(y)=\sup_{z\in \mathcal{Z}}\langle z,y\rangle$ of a closed bounded set $\mathcal{Z}$ -- a common setting in applications. We can then write
$$f_x(y)= g(y)+\frac{l\beta}{2}\|y-x\|^2+\max_{z\in \mathcal{Z}}\,\left\{ \langle z,c(x)+\nabla c(x)(y-x)\rangle\right\}.$$
Dual to the problem $\min_y f_{x}(y)$ is the maximization problem 
$$\max_{z\in \mathcal{Z}} \phi(z):=\langle c(x)-\nabla c(x),z\rangle-\left(g+\frac{l\beta}{2}\|\cdot-x\|^2\right)^{\star}(-\nabla c(x)^Tz).$$
For such problems, there are primal-dual methods \cite[Method 2]{excess} that generate iterates $y_k$ and $z_k$ satisfying
$$f(y_k)-\phi(z_k)\leq \frac{4\left(\frac{\|\nabla c(x)\|^2}{l\beta}+\|c(x)-\nabla c(x)x\|\right)\cdot \diam(\mathcal{Z})}{(k+1)(k+2)}$$
after $k$ iterations. The cost of each iteration is dominated by a matrix vector multiplication, a projection onto $\mathcal{Z}$, and a proximal operation of $g$. 
Assuming that $\|\nabla c(x_k)\|$ and $\|c(x_k)-\nabla c(x_k)x_k\|$ are bounded throughout the algorithm, to obtain an $\varepsilon_k$ accurate solution to the subproblem \eqref{eqn:iterate} requires on the order of  $1/\sqrt{\varepsilon_k}$ such basic operations. Setting $\varepsilon_k\approx \frac{1}{k^{1+q}}$ for some $q>0$, we deduce $\sum^{\infty}_{i=1} \varepsilon_i\leq 1/q$. Thus we can find a point $x_{k}$ satisfying
$\|\bar{x}_{k+1}-x_k\|\approx \varepsilon$ after at most on the order of $\frac{1}{\varepsilon^{3+q}}\cdot\frac{1}{q^{(1+q)/2}}$ basic operations. %Taking into account the inequality $\frac{1}{\varepsilon^{3+q}}\cdot\frac{1}{q^{(1+q)/2}}\leq \frac{1}{\varepsilon^{3+q}}\cdot\frac{1}{q^{1/2}}$ and minimizing the right-hand-side in $q$
One good choice is $q=\frac{1}{2\ln(1/\varepsilon)}$. %Thus the final complexity bound becomes $\frac{\sqrt{2\ln(1/\varepsilon)}}{\varepsilon^{3+(2\ln(1/\varepsilon))^{-1}}}$
To illustrate, for $\epsilon\approx 10^{-3}$, the complexity bound becomes on the order of $\frac{4}{\varepsilon^{3.07}}$. The meaning of $\|\bar{x}_{k+1}-x_k\|\approx \varepsilon$ in terms of ``near-stationarity'' becomes apparent with an application of Corollary~\ref{cor:quad_gr}.

\section{Inexact extensions \& model decrease as termination criteria}\label{sec:inexact}
Often, it may be impossible to obtain an exact minimizer $x^+$ of a model function $f_x$. 
What can one say then when $x^+$ minimizes the model function $f_{x}$ only approximately? By ``approximately'', one can mean a number of concepts. Two most natural candidates are that $x^+$ is {\em $\epsilon$-optimal}, meaning $f_{x}(x^+)\leq \inf f_x +\epsilon$, or that $x^+$ is {\em $\epsilon$-stationary}, meaning $|\nabla f_x(x^+)|\leq \epsilon$.
In both cases, all the results of Section~\ref{sec:main}  generalize quickly by bootstrapping Theorem~\ref{thm:main}; under a mild condition, both of the two notions above imply that $x^+$ is a minimizer of a slightly perturbed function, to which the key Theorem~\ref{thm:main} can be directly applied. 

\subsection{Near-optimality for the subproblems}
We begin with $\epsilon$-optimality, and discuss $\epsilon$-stationarity in Section~\ref{subsec:near-crit}. The following is an inexact analogue of Theorem~\ref{thm:main}. Though the statement may appear cumbersome at first glance, it simplifies dramatically in the most important case where $\omega$ is a quadratic; this case is recorded in Corollary~\ref{cor:quad_speci_func_err}.

\begin{theorem}[Perturbation result under approximate optimality]\label{thm:main_inexact}{\hfill \\ }
	Consider a closed function $f_x\colon\X\to\overline\R$ such that the inequality
	$$|f_x(y)-f(y)|\leq \omega(d(x,y)) \quad \textrm{ holds for all }  y\in\mathcal{X},$$
	where $\omega$ is some growth function.
Let $x^+$ be a point satisfying $f_x(x^+)\leq \inf f_x+\epsilon$. Then for any constant $\rho>0$, there exist two points $z$ and $\hat x$ satisfying the following.
\begin{enumerate}
	\item {\bf (point proximity)} The inequalities  $$\quad d(x^+,z)\leq  \frac{\epsilon}{\rho}\qquad \textrm{and} \qquad d(z,\hat x)\leq 2\cdot\frac{\omega(d(z,x))}{\omega'(d(z,x))} \qquad \textrm{hold},$$
	  under the convention $\frac{0}{0}=0$,
	\item {\bf (value proximity)} $\quad f(\hat x)\leq f(x^+)+2\omega(d(z,x))+\omega(d(x^+,x))$,
	\item {\bf (near-stationarity)} $\quad |\nabla f|(\hat x)\leq \rho+\omega'(d( z,x))+\omega'(d(\hat x,x))$.
\end{enumerate}
\end{theorem}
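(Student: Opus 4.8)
The plan is to reduce Theorem~\ref{thm:main_inexact} to the already-established Theorem~\ref{thm:main} by first absorbing the $\epsilon$-optimality of $x^+$ into a single Ekeland step, producing an \emph{exact} minimizer of a perturbed model, and then invoking the exact perturbation result on that perturbed model. First I would form the function $\tilde f_x(y):=f_x(y)+\rho\cdot d(x^+,y)$ after applying Ekeland's variational principle (Theorem~\ref{thm:eke}) to $f_x$ with the tolerance $\epsilon$ and the chosen parameter $\rho$: since $f_x(x^+)\leq \inf f_x+\epsilon$, Ekeland produces a point $z$ with $d(x^+,z)\leq \epsilon/\rho$, with $f_x(z)\leq f_x(x^+)$, and such that $z$ is the (unique) minimizer of $y\mapsto f_x(y)+\rho\cdot d(z,y)$. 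This gives the first point-proximity bound immediately, and it exhibits $z$ as an \emph{exact} minimizer of the model $g_x(y):=f_x(y)+\rho\cdot d(z,y)$.

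The second step is to check that $g_x$ is still a legitimate Taylor-like model for a perturbed target function, so that Theorem~\ref{thm:main} applies with $z$ playing the role of $x^+$ and $x$ unchanged as the base point. Here the natural choice is to compare $g_x$ against $f(\cdot)+\rho\cdot d(z,\cdot)$: one has $|g_x(y)-(f(y)+\rho d(z,y))|=|f_x(y)-f(y)|\leq \omega(d(x,y))$, so the same growth function $\omega$ works verbatim. Applying Theorem~\ref{thm:main} to the pair $\big(f+\rho\, d(z,\cdot),\, g_x\big)$ with base point $x$ and minimizer $z$ yields a point $\hat x$ with $d(z,\hat x)\le 2\,\omega(d(z,x))/\omega'(d(z,x))$ (the second point-proximity bound, with the $0/0=0$ convention covering the degenerate case $z=x$), with $(f+\rho d(z,\cdot))(\hat x)\le (f+\rho d(z,\cdot))(z)+\omega(d(z,x))$, and with $|\nabla (f+\rho d(z,\cdot))|(\hat x)\le \omega'(d(z,x))+\omega'(d(\hat x,x))$.

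The third step is pure bookkeeping to translate these back into statements about $f$ alone. For near-stationarity, the triangle inequality for the slope gives $|\nabla f|(\hat x)\le |\nabla(f+\rho d(z,\cdot))|(\hat x)+\rho$ (the perturbation $\rho\, d(z,\cdot)$ has slope at most $\rho$ everywhere), which combines with the bound above to yield $|\nabla f|(\hat x)\le \rho+\omega'(d(z,x))+\omega'(d(\hat x,x))$. For value proximity, from $(f+\rho d(z,\cdot))(\hat x)\le (f+\rho d(z,\cdot))(z)+\omega(d(z,x))$ I drop the nonnegative term $\rho\, d(z,\hat x)$ on the left, getting $f(\hat x)\le f(z)+\rho\, d(z,z)+\omega(d(z,x))=f(z)+\omega(d(z,x))$; then I bound $f(z)$ in terms of $f(x^+)$ using $f(z)\le f_x(z)+\omega(d(z,x))\le f_x(x^+)+\omega(d(z,x))\le f(x^+)+\omega(d(x^+,x))+\omega(d(z,x))$, where the middle inequality is $f_x(z)\le f_x(x^+)$ from Ekeland and the last uses the model bound at $x^+$. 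Adding these gives $f(\hat x)\le f(x^+)+2\omega(d(z,x))+\omega(d(x^+,x))$, as claimed.

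The main obstacle, and the only place requiring care, is verifying that the two elementary slope inequalities used above are valid in the bare metric-space setting: namely $|\nabla(f+\rho\, d(z,\cdot))|(\hat x)\ge |\nabla f|(\hat x)-\rho$ (equivalently the reverse with $f$ and the perturbation swapped), and that $g_x$ inherits closedness and lower-boundedness so Ekeland and Theorem~\ref{thm:main} genuinely apply. The slope inequality follows from the definition of $|\nabla\cdot|$ as a $\limsup$ of difference quotients together with the $1$-Lipschitz continuity of $y\mapsto \rho\, d(z,y)$ (this is exactly the ``easy argument'' invoked in the proof of Theorem~\ref{thm:main}, applied with $\omega'(d(\cdot,x))$ replaced by the constant $\rho$); lower-boundedness of $g_x$ on the relevant sublevel set is automatic since $f_x\ge f-\omega(d(x,\cdot))\ge \inf f_x$ is bounded below wherever needed, exactly as in the proof of Theorem~\ref{thm:main}. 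Everything else is routine algebra with the growth function.
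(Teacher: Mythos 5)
Your proposal is correct and is essentially identical to the paper's own proof: one Ekeland step on $f_x$ to produce the exact minimizer $z$ of $f_x(\cdot)+\rho\, d(\cdot,z)$, followed by an application of Theorem~\ref{thm:main} to the tilted pair $\big(f+\rho\, d(\cdot,z),\, f_x+\rho\, d(\cdot,z)\big)$, with the same bookkeeping for the value and slope estimates. No gaps.
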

\begin{proof}
By Theorem~\ref{thm:eke}, for any $\rho>0$ there exists a point $z$ satisfying $f_x(z)\leq f_x(x^+)$, $d(z,x^+)\leq \frac{\epsilon}{\rho}$, and so that $z$ is the unique minimizer of the function $y\mapsto f_x(y)+\rho\cdot d(y,z)$. Define the functions 
$\widetilde f(y):=f(y)+\rho\cdot d(y,z)$ and $\widetilde f_x(y):=f_x(y)+\rho\cdot d(y,z)$. Notice the inequality 
$$|\widetilde{f}_x(y)-\widetilde{f}(y)|\leq \omega(d(x,y)) \quad \textrm{ for all }y.$$
Thus applying Theorem~\ref{thm:main}, we deduce that there exists a point $\hat x$ satisfying $d(z,\hat x)\leq 2\cdot\frac{\omega(d(z,x))}{\omega'(d(z,x))}$, $\widetilde{f}(\hat x)\leq \widetilde f(z)+\omega(d(z,x))$, and 
$|\nabla \widetilde f|(\hat x)\leq \omega'(d( z,x))+\omega'(d(\hat x,x))$. The point proximity claim is immediate. The value proximity follows from the inequality
\begin{align*}
f(\hat x)\leq  \widetilde{f}(\hat x)&\leq f(z)+\omega(d(z,x))\leq f_x(z)+2\omega(d(z,x))\leq f_x(x^+)+2\omega(d(z,x))\\
&\leq f(x^+)+2\omega(d(z,x))+\omega(d(x^+,x)).
\end{align*}
Finally, the inequalities
$$|\nabla f|(\hat x) \leq \rho+|\nabla \widetilde f|(\hat x) \leq \rho+ \omega'(d( z,x))+\omega'(d(\hat x,x))$$
imply the near-stationarity claim.
\end{proof}

Specializing to when $\omega$ is a quadratic yields the following.

\begin{corollary}[Perturbation  under quadratic error and approximate optimality]\label{cor:quad_speci_func_err}
	Consider a closed function $f_x\colon\X\to\overline\R$ and suppose that with some real $\eta>0$ the inequality
	$$|f_x(y)-f(y)|\leq \frac{\eta}{2}d^2(x,y) \quad \textrm{ holds for all }  y\in\mathcal{X}.$$
	Let $x^+$ be a point satisfying $f_x(x^+)\leq \inf f_x+\epsilon$. Then there exists a point $\hat x$ satisfying the following.
	\begin{enumerate}
		\item {\bf (point proximity)} $\quad d(x^+,\hat x)\leq \sqrt{\frac{4\epsilon}{3\eta}}+d(x^+,x),$
%		under the convention $\frac{0}{0}=0$,
		\item {\bf (value proximity)} $\quad f(\hat x)\leq f(x^+)+\eta\left(\sqrt{\frac{\epsilon}{3\eta}}+d(x^+,x)\right)^2 +\frac{\eta}{2} d^2(x^+,x)$,
		\item {\bf (near-stationarity)} $\quad |\nabla f|(\hat x)\leq \sqrt{12\eta\epsilon}+3\eta\cdot d(x^+,x)$.
	\end{enumerate}
\end{corollary}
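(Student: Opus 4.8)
The plan is to obtain Corollary~\ref{cor:quad_speci_func_err} as a direct specialization of Theorem~\ref{thm:main_inexact} with the quadratic growth function $\omega(t)=\frac{\eta}{2}t^2$, after making a single good choice of the free parameter $\rho$. Note first that for this $\omega$ we have $\omega'(t)=\eta t$, so $\frac{\omega(t)}{\omega'(t)}=\frac{t}{2}$; hence the second point-proximity bound in Theorem~\ref{thm:main_inexact} becomes simply $d(z,\hat x)\le d(z,x)$, and the near-stationarity bound becomes $|\nabla f|(\hat x)\le \rho+\eta\, d(z,x)+\eta\, d(\hat x,x)$.

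The main step is to choose $\rho$ so as to balance the $\frac{\epsilon}{\rho}$ term (controlling $d(x^+,z)$) against the terms involving $d(z,x)$. The natural choice is $\rho:=\sqrt{3\eta\epsilon}$, which gives $d(x^+,z)\le\frac{\epsilon}{\rho}=\sqrt{\frac{\epsilon}{3\eta}}$. First I would use the triangle inequality to estimate $d(z,x)\le d(z,x^+)+d(x^+,x)\le \sqrt{\frac{\epsilon}{3\eta}}+d(x^+,x)=:D$. Then point proximity follows from $d(x^+,\hat x)\le d(x^+,z)+d(z,\hat x)\le \sqrt{\frac{\epsilon}{3\eta}}+D=\sqrt{\frac{4\epsilon}{3\eta}}+d(x^+,x)$, using $2\sqrt{\frac{\epsilon}{3\eta}}=\sqrt{\frac{4\epsilon}{3\eta}}$. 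For value proximity, plug $d(z,x)\le D$ into $f(\hat x)\le f(x^+)+2\omega(d(z,x))+\omega(d(x^+,x))=f(x^+)+\eta\, d(z,x)^2+\frac{\eta}{2}d(x^+,x)^2\le f(x^+)+\eta D^2+\frac{\eta}{2}d(x^+,x)^2$, which is exactly the stated bound.

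For near-stationarity, I would bound $d(\hat x,x)\le d(\hat x,z)+d(z,x)\le 2d(z,x)\le 2D$, so that $|\nabla f|(\hat x)\le \rho+\eta\, d(z,x)+\eta\, d(\hat x,x)\le \sqrt{3\eta\epsilon}+\eta D+2\eta D=\sqrt{3\eta\epsilon}+3\eta D$. Expanding $3\eta D=3\eta\sqrt{\frac{\epsilon}{3\eta}}+3\eta\, d(x^+,x)=\sqrt{3\eta\epsilon}+3\eta\, d(x^+,x)$ and combining with the leading $\sqrt{3\eta\epsilon}$ gives $|\nabla f|(\hat x)\le 2\sqrt{3\eta\epsilon}+3\eta\, d(x^+,x)=\sqrt{12\eta\epsilon}+3\eta\, d(x^+,x)$, as claimed.

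The argument is essentially bookkeeping once $\rho$ is fixed, so there is no real obstacle; the only thing requiring a little care is the convention $\frac{0}{0}=0$ in Theorem~\ref{thm:main_inexact} — if $d(z,x)=0$ then $z=x$, the term $\frac{\omega(d(z,x))}{\omega'(d(z,x))}$ is interpreted as $0$, and all three displayed inequalities hold trivially (indeed $\hat x=z=x$ works), so the specialization goes through in that degenerate case as well. I would note this in one line rather than treating it as a separate case.
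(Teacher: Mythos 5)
Your proposal is correct and follows essentially the same route as the paper: specialize Theorem~\ref{thm:main_inexact} to $\omega(t)=\frac{\eta}{2}t^2$, bound $d(z,x)$ and $d(\hat x,x)$ by the triangle inequality, and take $\rho=\sqrt{3\eta\epsilon}$ (the paper obtains this value by minimizing $\rho+3\eta\epsilon/\rho$ over $\rho$, while you posit it up front and verify — an immaterial difference). All the arithmetic checks out.
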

\begin{proof}
Consider the two point $\hat x$ and $z$ guaranteed to exist by Theorem~\ref{thm:main_inexact}. Observe the inequalities $$d(z,x)\leq d(z,x^+)+d(x^+,x)\leq \frac{\epsilon}{\rho}+d(x^+,x),$$
	and
	$$d(x^+,\hat x)\leq d(x^+,z)+d(z,\hat x)\leq \frac{\epsilon}{\rho}+d(z,x)\leq 2 \frac{\epsilon}{\rho}+d(x^+,x).$$
Hence we obtain $$f(\hat x)\leq  f(x^+)+\eta\left(\frac{\epsilon}{\rho}+d(x^+,x)\right)^2+\frac{\eta}{2}d^2(x^+,x),$$
and
$$|\nabla f|(\hat x)\leq \rho+\eta\left(\frac{\epsilon}{\rho}+d(x^+,x)\right)+\eta\cdot d(\hat x,x)\leq \left(\rho+\frac{3\eta \epsilon}{\rho}\right)+3\eta\cdot d(x^+,x).$$
Minimizing the right-hand-side of the last inequality in $\rho>0$ yields the choice $\rho=\sqrt{3\eta\epsilon}$. The result follows.
\end{proof}

%This result may look complicated at first sight, but it is entirely transparent for the most important example $\omega(t)=\frac{\eta}{r}t^r$ with $r>1$.
%
%\begin{corollary}
%Consider a lower-semicontinous function $f_x\colon\X\to\overline\R$ and suppose that with some real $\eta>0$ and $r >1$ the inequality
%	$$|f_x(y)-f(y)|\leq \frac{\eta}{r}\cdot d^r(x,y) \quad \textrm{ holds for all }  y\in\mathcal{X},$$
%	where $\omega$ is some growth function.
%Let $x^+$ be a point satisfying $f_x(x^+)\leq \inf f_x+\epsilon$. Then for any constant $\rho>0$, there exist two points $z$ and $\hat x$ satisfying 
%\begin{enumerate}
%	\item {\bf (point proximity)} $\quad d(x^+,z)\leq  \frac{\epsilon}{\rho}\quad$ and $\quad d(z,\hat x)\leq 2\cdot\frac{\omega(d(z,x))}{\omega'(d(z,x))}$,
%	\item {\bf (value proximity)} $\quad f(\hat x)\leq f(x^+)+2\omega(d(z,x))+\omega(d(x^+,x))$,
%	\item {\bf (near-stationarity)} $\quad |\nabla f|(\hat x)\leq \rho+\omega'(d( z,x))+\omega'(d(\hat x,x))$.
%\end{enumerate}
%\end{corollary}
%\begin{proof}
%Let $z$ and $\hat x$ be the two points guaranteed to exist by Corollary~\ref{cor:main_inexact}. We deduce $d(x^+,z)\leq \epsilon/\rho$ and $d(z,\hat x)\leq \frac{2}{r}d(z,x)$. Note $$d(z,x)\leq d(z,x^+)+d(x^+,x)\leq \epsilon/\rho+d(x,x^+).$$
%Also we have $$f(\hat x)\leq f(x^+)+\frac{\eta}{r}(2(\epsilon/\rho+d(x,x^+))^r+d^r(x^+,x)).$$
%Finally
%$$|\nabla f|(\hat x)\leq \rho+\eta((\epsilon/\rho+d(x,x^+))^{r-1}+ d^{r-1}(\hat x,x))$$
%\end{proof}

An immediate consequence of Theorem~\ref{thm:main_inexact} is a subsequence converge result analogous to Corollary~\ref{cor:stat}.
\begin{corollary}[Subsequence convergence under  approximate optimality]\label{cor:subseq_inexact}
	Consider a sequence of points $x_k$ and closed functions $f_{x_k}\colon\mathcal X\to\overline{\R}$ satisfying  $d(x_{k+1},x_{k})\to 0$ and $f(x_{k+1})\leq \inf f_{x_k}+\epsilon_k$ for some sequence $\epsilon_k\to 0$. Suppose moreover that the inequality 
	$$|f_{x_k}(y)-f(y)|\leq \omega(d(y,x_k))\qquad \textrm{ holds for all indices } k \textrm{ and points } y\in \mathcal X,$$ 
	where $\omega$ is a proper growth function.
	%$\|x_k-x_{k-1}\|\to 0$. 
	If $(x^*,f(x^*))$ is a limit point of the sequence $(x_{k},f(x_k))$, then  $x^*$ is 
	stationary for $f$.
\end{corollary}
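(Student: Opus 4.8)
The plan is to mimic the proof of Corollary~\ref{cor:stat}, but invoke the inexact perturbation result Theorem~\ref{thm:main_inexact} instead of the exact one Theorem~\ref{thm:main}. First I would fix a subsequence $x_{k_i}$ with $(x_{k_i},f(x_{k_i}))\to(x^*,f(x^*))$, and for each index apply Theorem~\ref{thm:main_inexact} to the pair $(f,f_{x_{k_i-1}})$ with the approximate minimizer $x^+=x_{k_i}$, the tolerance $\epsilon=\epsilon_{k_i-1}$, and growth function $\omega$. I still have the freedom to choose the Ekeland parameter $\rho=\rho_i>0$ in that theorem, which will be the one extra degree of freedom I need to exploit. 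This produces, for each $i$, two points $z_i$ and $\hat x_i$ together with the three estimates (point, value, near-stationarity proximity).

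The core of the argument is choosing $\rho_i$ so that all relevant quantities tend to zero. Writing $\delta_i:=d(x_{k_i},x_{k_i-1})\to 0$, point proximity gives $d(x_{k_i},z_i)\leq \epsilon_{k_i-1}/\rho_i$ and $d(z_i,\hat x_i)\leq 2\,\omega(d(z_i,x_{k_i-1}))/\omega'(d(z_i,x_{k_i-1}))$. I would pick $\rho_i:=\sqrt{\epsilon_{k_i-1}}$ (or, if $\epsilon_{k_i-1}=0$, any positive sequence tending to zero), so that both $\rho_i\to 0$ and $\epsilon_{k_i-1}/\rho_i=\sqrt{\epsilon_{k_i-1}}\to 0$. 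Then $d(x_{k_i},z_i)\to 0$, hence $d(z_i,x_{k_i-1})\leq d(z_i,x_{k_i})+\delta_i\to 0$; since $\omega$ is a \emph{proper} growth function, $\omega(t)/\omega'(t)\to 0$ as $t\to 0$, so $d(z_i,\hat x_i)\to 0$, and therefore $d(\hat x_i,x^*)\to 0$. For the function values, value proximity reads $f(\hat x_i)\leq f(x_{k_i})+2\omega(d(z_i,x_{k_i-1}))+\omega(\delta_i)$; taking $\limsup$ and using $\omega\to 0$ at $0$ gives $\ls_i f(\hat x_i)\leq f(x^*)$, and lower-semicontinuity of $f$ upgrades this to $f(\hat x_i)\to f(x^*)$. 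Finally, near-stationarity gives $|\nabla f|(\hat x_i)\leq \rho_i+\omega'(d(z_i,x_{k_i-1}))+\omega'(d(\hat x_i,x_{k_i-1}))$; all three terms go to zero (using $\rho_i\to 0$, $\omega'(0)=0$ together with $\omega'$ continuous, and $d(\hat x_i,x_{k_i-1})\to 0$), so $|\nabla f|(\hat x_i)\to 0$. Combining $\hat x_i\to x^*$, $f(\hat x_i)\to f(x^*)$, and $|\nabla f|(\hat x_i)\to 0$ with the definition of the limiting slope yields $\overline{|\nabla f|}(x^*)=0$, i.e.\ $x^*$ is stationary.

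The main obstacle — though it is more bookkeeping than genuine difficulty — is the coupling between the Ekeland parameter $\rho_i$ and the tolerance $\epsilon_{k_i-1}$: unlike in the exact case, one cannot set $\rho$ to a fixed function of $\delta_i$, because $\rho$ also controls $d(x^+,z)=\epsilon/\rho$, and if $\rho_i$ shrinks too fast relative to $\epsilon_{k_i-1}$ the auxiliary point $z_i$ drifts away from $x_{k_i}$ and the whole chain of proximity estimates collapses. Choosing $\rho_i$ to balance these two effects (any $\rho_i\to 0$ with $\epsilon_{k_i-1}/\rho_i\to 0$ works, e.g.\ $\rho_i=\sqrt{\epsilon_{k_i-1}}$ when $\epsilon_{k_i-1}>0$, corresponding exactly to the optimal choice already made in Corollary~\ref{cor:quad_speci_func_err}) resolves it; this is the surprising point advertised in the introduction, that the tolerances and the step-sizes may go to zero at independent rates. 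A minor care point is the degenerate case $\epsilon_{k_i-1}=0$ infinitely often, handled by the convention $0/0=0$ in Theorem~\ref{thm:main_inexact} and by picking any positive null sequence for $\rho_i$ there.
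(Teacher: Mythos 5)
Your proposal is correct and follows exactly the route the paper intends: it repeats the argument of Corollary~\ref{cor:stat} with Theorem~\ref{thm:main_inexact} in place of Theorem~\ref{thm:main} and the choice $\rho_i=\sqrt{\epsilon_{k_i-1}}$, which is precisely the choice the paper's (one-line) proof prescribes, and your handling of the auxiliary point $z_i$ and the degenerate case $\epsilon_{k_i-1}=0$ is sound. The only pedantic point is that the hypothesis as stated bounds $f(x_{k+1})$ rather than $f_{x_k}(x_{k+1})$, so strictly one should feed the tolerance $\epsilon_k+\omega(d(x_{k+1},x_k))$ into Theorem~\ref{thm:main_inexact}; since this still tends to zero, your argument goes through unchanged.
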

\begin{proof}
The proof is virtually identical to the proof of Corollary~\ref{cor:stat}, except that Theorem~\ref{thm:main_inexact} replaces Theorem~\ref{thm:main} with $\rho_k=\sqrt{\epsilon_k}$. We leave the details to the reader.
\end{proof}

\subsection{Model improvement as a stopping criterion}

The underlying premise of our work so far is that the step-size $d(x_{k+1},x_{k})$ can be reliably used to terminate the model-based algorithm in the sense of Theorem~\ref{thm:main}. We now prove that the same can be said for termination criteria based on the model decrease $\Delta_x:=f(x_k)-\inf f_{x_k}$. Indeed, this follows quickly by setting $x^+:=x$, $\epsilon:=\sqrt{\Delta_x}$, and $\rho$ a multiple of $\sqrt{\Delta_x}$ in Theorem~\ref{thm:main_inexact}.

\begin{corollary}[Perturbation result for model improvement]\label{cor:inex_eval_base}
	Consider a closed function $f_x\colon\X\to\overline\R$ such that the inequality
	$$|f_x(y)-f(y)|\leq \omega(d(x,y)) \quad \textrm{ holds for all }  y\in\mathcal{X},$$
	where $\omega$ is some growth function. Define the model improvement
	$\Delta_x:=f(x)-\inf f_{x}$. Then for any constant $c>0$, there exist two points $z$ and $\hat x$ satisfying the following.
	\begin{enumerate}
		\item {\bf (point proximity)} The inequalities $$\quad d(x,z)\leq  c^{-1}\sqrt{\Delta_x}\qquad \textrm{and} \qquad d(z,\hat x)\leq 2\cdot\frac{\omega(d(z,x))}{\omega'(d(z,x))}\qquad\textrm{hold},$$
		under the convention $\frac{0}{0}=0$,
	\item {\bf (value proximity)} $\quad f(\hat x)\leq f(x)+2\omega(d(z,x))$,
	\item {\bf (near-stationarity)} $\quad |\nabla f|(\hat x)\leq c\sqrt{\Delta_x}+\omega'(d( z,x))+\omega'(d(\hat x,x))$.
	\end{enumerate}
\end{corollary}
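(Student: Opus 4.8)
The proof I would give is a direct reduction to Theorem~\ref{thm:main_inexact}, exactly as the remark preceding the statement hints. The idea is that the model improvement $\Delta_x = f(x) - \inf f_x$ measures how badly $x$ itself fails to minimize $f_x$. Indeed, since $|f_x(x) - f(x)| \le \omega(d(x,x)) = \omega(0) = 0$, we have $f_x(x) = f(x)$, so $f_x(x) = \inf f_x + \Delta_x$. Thus $x$ is an $\epsilon$-optimal point for the model $f_x$ with $\epsilon := \Delta_x$. I would therefore apply Theorem~\ref{thm:main_inexact} with the choice $x^+ := x$ and $\epsilon := \Delta_x$.

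Now I would pick the free constant $\rho$ in Theorem~\ref{thm:main_inexact} as $\rho := c\sqrt{\Delta_x}$ (if $\Delta_x = 0$ then $x$ already minimizes $f_x$ and one invokes Theorem~\ref{thm:main} directly, getting $|\nabla f|(x) = 0$; alternatively one reads everything with the convention $\tfrac{0}{0}=0$ and $z = \hat x = x$). With $x^+ = x$ and $\epsilon = \Delta_x$, the point proximity bound $d(x^+, z) \le \epsilon/\rho$ becomes $d(x, z) \le \Delta_x/(c\sqrt{\Delta_x}) = c^{-1}\sqrt{\Delta_x}$, which is exactly the first claimed inequality; the second inequality $d(z,\hat x) \le 2\,\omega(d(z,x))/\omega'(d(z,x))$ is carried over verbatim. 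For value proximity, Theorem~\ref{thm:main_inexact} gives $f(\hat x) \le f(x^+) + 2\omega(d(z,x)) + \omega(d(x^+,x))$; since $x^+ = x$ the last term is $\omega(0) = 0$, leaving $f(\hat x) \le f(x) + 2\omega(d(z,x))$, the claimed bound. For near-stationarity, the theorem gives $|\nabla f|(\hat x) \le \rho + \omega'(d(z,x)) + \omega'(d(\hat x,x)) = c\sqrt{\Delta_x} + \omega'(d(z,x)) + \omega'(d(\hat x,x))$, again exactly as stated.

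There is essentially no obstacle here — every piece of the statement drops out of Theorem~\ref{thm:main_inexact} after substituting $x^+ = x$, $\epsilon = \Delta_x$, $\rho = c\sqrt{\Delta_x}$ and simplifying $\omega(0) = 0$. The only small point worth a sentence is to note that $\Delta_x \ge 0$ always holds (since $|f_x(x) - f(x)| \le \omega(0) = 0$ forces $f_x(x) = f(x) \ge \inf f_x$), so $\sqrt{\Delta_x}$ and the choice $\rho = c\sqrt{\Delta_x}$ make sense, and to handle the degenerate case $\Delta_x = 0$ cleanly via the convention $\tfrac{0}{0} = 0$ together with the first assertion of Theorem~\ref{thm:main}. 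I would write this up in a few lines, and I would leave the routine simplifications to the reader, exactly in the style of the proof of Corollary~\ref{cor:subseq_inexact}.
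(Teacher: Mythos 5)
Your proof is correct and is essentially identical to the paper's: the paper also proves this by substituting $x^+:=x$ and $\rho:=c\sqrt{\Delta_x}$ into Theorem~\ref{thm:main_inexact}. In fact your choice $\epsilon:=\Delta_x$ is the right one --- the paper's printed substitution $\epsilon:=\sqrt{\Delta_x}$ is a typo, since only $\epsilon=\Delta_x$ makes $x$ genuinely $\epsilon$-optimal for $f_x$ and yields $d(x,z)\leq \epsilon/\rho=c^{-1}\sqrt{\Delta_x}$ as claimed.
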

\begin{proof}
	Simply set	$x^+:=x$, $\epsilon:=\sqrt{\Delta_x}$, and $\rho=c\sqrt{\Delta_x}$ in Theorem~\ref{thm:main_inexact}.
\end{proof}

To better internalize the estimates, let us look at the case when $\omega$ is a quadratic.

\begin{corollary}[Perturbation for model improvement with quadratic error]\label{cor:perturb:func}
Consider a closed function $f_x\colon\X\to\overline\R$ and suppose that with some real $\eta>0$ the inequality
	$$|f_x(y)-f(y)|\leq \frac{\eta}{2}d^2(x,y) \quad \textrm{ holds for all }  y\in\mathcal{X}.$$
	Define the model decrease
	$$\Delta_x:=f(x)-\inf_y f_x(y).$$
%Let $x^+$ be a point satisfying $f_x(x^+)\leq \inf f_x+\epsilon$. 
Then there exists a point $\hat x$ satisfying 
\begin{enumerate}
	\item {\bf (point proximity)} $\quad d(\hat x,x)\leq  \sqrt{\frac{4}{3\eta}}\cdot\sqrt{\Delta_x}$,
	\item {\bf (value proximity)} $\quad f(\hat x)\leq f(x)+\frac{1}{3}\cdot\Delta_x$,
	\item {\bf (near-stationarity)} $\quad |\nabla f|(\hat x)\leq \sqrt{12\eta} \cdot \sqrt{\Delta_x}$.
\end{enumerate}
\end{corollary}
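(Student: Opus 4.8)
The plan is to obtain this as a direct specialization of Corollary~\ref{cor:inex_eval_base} to the quadratic growth function $\omega(t)=\frac{\eta}{2}t^2$, which has $\omega'(t)=\eta t$ and satisfies $\frac{\omega(t)}{\omega'(t)}=\frac{t}{2}$, and with a carefully chosen constant $c$. First I would apply Corollary~\ref{cor:inex_eval_base} with this $\omega$ and a constant $c>0$ to be optimized, producing points $z$ and $\hat x$ with $d(x,z)\leq c^{-1}\sqrt{\Delta_x}$ and $d(z,\hat x)\leq 2\cdot\frac{\omega(d(z,x))}{\omega'(d(z,x))}=d(z,x)$. Combining via the triangle inequality gives $d(\hat x,x)\leq d(\hat x,z)+d(z,x)\leq 2d(z,x)\leq 2c^{-1}\sqrt{\Delta_x}$.

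Next I would translate the value and near-stationarity bounds. For value proximity, $f(\hat x)\leq f(x)+2\omega(d(z,x))=f(x)+\eta\, d(z,x)^2\leq f(x)+\eta c^{-2}\Delta_x$. For near-stationarity, $|\nabla f|(\hat x)\leq c\sqrt{\Delta_x}+\omega'(d(z,x))+\omega'(d(\hat x,x))=c\sqrt{\Delta_x}+\eta\, d(z,x)+\eta\, d(\hat x,x)\leq c\sqrt{\Delta_x}+\eta c^{-1}\sqrt{\Delta_x}+2\eta c^{-1}\sqrt{\Delta_x}=\left(c+3\eta c^{-1}\right)\sqrt{\Delta_x}$. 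So all three bounds are expressed in terms of $c$ alone: $d(\hat x,x)\leq 2c^{-1}\sqrt{\Delta_x}$, $f(\hat x)-f(x)\leq \eta c^{-2}\Delta_x$, and $|\nabla f|(\hat x)\leq (c+3\eta c^{-1})\sqrt{\Delta_x}$.

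Finally I would choose $c$ to match the stated constants. The near-stationarity term $c+3\eta c^{-1}$ is minimized at $c=\sqrt{3\eta}$, giving $|\nabla f|(\hat x)\leq 2\sqrt{3\eta}\sqrt{\Delta_x}=\sqrt{12\eta}\sqrt{\Delta_x}$; with this choice $c^{-2}=\frac{1}{3\eta}$ so $f(\hat x)-f(x)\leq \frac{1}{3}\Delta_x$, and $2c^{-1}=\sqrt{4/(3\eta)}$ so $d(\hat x,x)\leq \sqrt{\frac{4}{3\eta}}\sqrt{\Delta_x}$, exactly as claimed. There is essentially no obstacle here: the only mild care needed is the degenerate case $\Delta_x=0$ (then $f_x(x)=\inf f_x$, so $x$ is a minimizer of $f_x$ and Theorem~\ref{thm:main} with $x^+=x$ already gives $|\nabla f|(x)=0$, making $\hat x:=x$ work), and noting that the convention $\frac{0}{0}=0$ in Corollary~\ref{cor:inex_eval_base} handles $d(z,x)=0$ so that the identity $\frac{\omega(d(z,x))}{\omega'(d(z,x))}=\frac{d(z,x)}{2}$ is valid throughout.
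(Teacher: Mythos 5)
Your proof is correct and is essentially the paper's argument: the paper obtains the result by setting $x^+:=x$ and $\epsilon:=\Delta_x$ in Corollary~\ref{cor:quad_speci_func_err}, whereas you set $x^+:=x$, $\epsilon:=\Delta_x$ and specialize $\omega$ to the quadratic via Corollary~\ref{cor:inex_eval_base} — the same two substitutions into Theorem~\ref{thm:main_inexact}, performed in the opposite order, with your optimal choice $c=\sqrt{3\eta}$ corresponding exactly to the paper's $\rho=\sqrt{3\eta\epsilon}$. Your explicit computations and handling of the degenerate case $\Delta_x=0$ are sound (and incidentally confirm that the ``$\epsilon:=\sqrt{\Delta_x}$'' in the paper's one-line proof should read $\epsilon:=\Delta_x$).
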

\begin{proof}
%Fixing a constant $c>0$ and applying Corollary~\ref{cor:inex_eval_base}, we obtain two points $z$ and $\hat x$ satisfying
%$$d(x,z)\leq c^{-1} \sqrt{\Delta_x},\quad d(z,\hat x)\leq d(z,x),$$ 
%$$f(\hat x)\leq f(x)+\eta d^2(z,x)\leq  f(x)+\eta c^{-2}\Delta_x,$$
%and 
%$$|\nabla f|(\hat x)\leq c\sqrt{\Delta_x}+\eta(d( z,x)+d(\hat x,x))\leq \left(c+ \frac{3\eta }{c}\right)\sqrt{\Delta_x}.$$
%Minimizing the right-hand-side of the last inequality in $c>0$ yields the choice $c=\sqrt{3\eta}$. The result follows.
%%Taking into account convexity of $\omega$, we deduce that $\omega'$ is nondecreasing and therefore
%%$d(x,z)\leq \epsilon/\rho$, $d(z,\hat x)\leq 2\cdot\frac{\omega(d(z,x))}{\omega'(d(z,x))}$, 
%% 
%% 
%% 
%% and $|\nabla f|(\hat x)\leq \rho+\omega'(\epsilon/\rho)+\omega'(d(\hat x,x))$
 Simply set $x^+:=x$ and $\epsilon:=\sqrt{\Delta_x}$ in Corollary~\ref{cor:quad_speci_func_err}.
\end{proof}

The subsequential convergence result in Corollary~\ref{cor:subseq_inexact} assumes that the step-sizes $d(x_{k+1},x_k)$ tend to zero. Now, it is easy to see that an analogous conclusion holds if instead the model improvements $f(x_k)-f_{x_k}(x_{k+1})$ tend to zero.

\begin{corollary}[Subsequence convergence under  approximate optimality II]
	Consider a sequence of points $x_k$ and closed functions $f_{x_k}\colon\mathcal X\to\overline{\R}$ satisfying $f_{x_k}(x_{k+1})\leq \inf f_{x_k}+\epsilon_k$ for some sequence $\epsilon_k\to 0$. Suppose that the inequality 
	$$|f_{x_k}(y)-f(y)|\leq \omega(d(y,x_k))\qquad \textrm{ holds for all indices } k \textrm{ and points } y\in \mathcal X,$$ 
	where $\omega$ is a proper growth function.
	%$\|x_k-x_{k-1}\|\to 0$. 
	Suppose moreover that the model improvements
	$f(x_k)-f_{x_k}(x_{k+1})$ tend to zero.
	If $(x^*,f(x^*))$ is a limit point of the sequence $(x_{k},f(x_k))$, then  $x^*$ is 
	stationary for $f$.
\end{corollary}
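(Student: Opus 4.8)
The plan is to mimic the proof of Corollary~\ref{cor:stat} (subsequence convergence), but feed in the points $\hat x$ produced by Corollary~\ref{cor:inex_eval_base} rather than Theorem~\ref{thm:main}. First I would fix a subsequence $x_{k_i}$ with $(x_{k_i}, f(x_{k_i}))\to (x^*, f(x^*))$. For each index, set $\Delta_{k_i}:=f(x_{k_i})-\inf f_{x_{k_i}}$; the hypothesis that the model improvements $f(x_k)-f_{x_k}(x_{k+1})$ tend to zero, together with $f_{x_k}(x_{k+1})\leq \inf f_{x_k}+\epsilon_k$ and $\epsilon_k\to 0$, gives
\[
\Delta_{k}=\bigl(f(x_k)-f_{x_k}(x_{k+1})\bigr)+\bigl(f_{x_k}(x_{k+1})-\inf f_{x_k}\bigr)\le \bigl(f(x_k)-f_{x_k}(x_{k+1})\bigr)+\epsilon_k\longrightarrow 0 .
\]
(Note $\Delta_k\ge 0$ always, since $f(x_k)=f_{x_k}(x_k)-\bigl(f_{x_k}(x_k)-f(x_k)\bigr)\ge \inf f_{x_k}-\omega(0)=\inf f_{x_k}$.) So the quantity driving Corollary~\ref{cor:inex_eval_base} tends to zero along the subsequence.

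Next I would apply Corollary~\ref{cor:inex_eval_base} at $x=x_{k_i}$ with, say, the constant $c=1$, obtaining points $z_{k_i}$ and $\hat x_{k_i}$. From point proximity, $d(x_{k_i},z_{k_i})\le \sqrt{\Delta_{k_i}}\to 0$, hence $z_{k_i}\to x^*$; and $d(z_{k_i},\hat x_{k_i})\le 2\,\omega(d(z_{k_i},x_{k_i}))/\omega'(d(z_{k_i},x_{k_i}))\to 0$ because $\omega$ is a proper growth function and $d(z_{k_i},x_{k_i})\to 0$ (using $\lim_{t\to 0}\omega(t)/\omega'(t)=0$ and the convention $0/0=0$). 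Therefore $\hat x_{k_i}\to x^*$. Value proximity gives $f(\hat x_{k_i})\le f(x_{k_i})+2\omega(d(z_{k_i},x_{k_i}))$, so $\ls_i f(\hat x_{k_i})\le f(x^*)$, and lower-semicontinuity of $f$ forces $f(\hat x_{k_i})\to f(x^*)$. Finally, near-stationarity gives
\[
|\nabla f|(\hat x_{k_i})\le \sqrt{\Delta_{k_i}}+\omega'(d(z_{k_i},x_{k_i}))+\omega'(d(\hat x_{k_i},x_{k_i})),
\]
and all three terms on the right tend to zero (the last two because $\omega'$ is continuous with $\omega'(0)=0$ and both distances tend to zero, using the triangle inequality to bound $d(\hat x_{k_i},x_{k_i})\le d(\hat x_{k_i},z_{k_i})+d(z_{k_i},x_{k_i})$). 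Hence $|\nabla f|(\hat x_{k_i})\to 0$, and since $\hat x_{k_i}\to x^*$ with $f(\hat x_{k_i})\to f(x^*)$, the definition of the limiting slope yields $\overline{|\nabla f|}(x^*)=0$, i.e. $x^*$ is stationary.

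The only subtlety — not really an obstacle — is the bookkeeping at the front: one must check that the two stated hypotheses ($f_{x_k}(x_{k+1})\le \inf f_{x_k}+\epsilon_k$ with $\epsilon_k\to 0$, and $f(x_k)-f_{x_k}(x_{k+1})\to 0$) jointly imply $\Delta_{k_i}\to 0$, which is the displayed two-term split above. Everything else is a verbatim repeat of the argument in Corollary~\ref{cor:stat}, so I would simply remark that the proof is identical to that of Corollary~\ref{cor:stat}, with Corollary~\ref{cor:inex_eval_base} in place of Theorem~\ref{thm:main} and $\Delta_{k_i}\to 0$ established as above, and leave the remaining details to the reader.
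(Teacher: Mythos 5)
Your proof is correct and follows exactly the route the paper intends for this (unproved, "easy to see") corollary: reduce to $\Delta_{k_i}=f(x_{k_i})-\inf f_{x_{k_i}}\to 0$ via the two-term split, then rerun the argument of Corollary~\ref{cor:stat} with the points supplied by Corollary~\ref{cor:inex_eval_base} in place of Theorem~\ref{thm:main}. The front-end bookkeeping ($\Delta_k\ge 0$ and $\Delta_k\to 0$) and the use of the proper-growth-function properties to kill the $\omega/\omega'$ and $\omega'$ terms are exactly the points that need checking, and you handle both.
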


Following the pattern of the previous sections, we next pass to error-bounds. The following result shows that the slope error-bound implies that, not only do the step-sizes $d(x_{k+1},x_k)$
linearly bound the distance of $x_k$ to the stationary-point set (Theorem~\ref{thm:slop_subreg}), but so do the values $\sqrt{f(x_{k})-\inf f_{x_{k}}}$.

\begin{corollary}[Slope and model error-bounds]\label{cor:func_error}
Let $S$ be an arbitrary set and fix a point $x^*\in S$ satisfying the condition:
 \begin{itemize}
\item {\bf(Slope error-bound)}$\quad \dist(x;S)\leq L\cdot |\nabla f|(x)\quad \text{ for all }\quad x\in {\bf B}_{\gamma}(x^*)$.
\end{itemize}
Consider a closed function $f_{x}\colon\mathcal X\to\overline{\R}$ and suppose that for some $\eta>0$ the  inequality 
$$|f_{x}(y)-f(y)|\leq \frac{\eta}{2}d^2(y,x)\qquad \textrm{ holds for all }  y\in \mathcal X.$$ 
%where $\omega$ is a superlinear growth function.
 %$\|x_k-x_{k-1}\|\to 0$. 
Then the following holds:
\begin{itemize}
\item {\bf(Model error-bound)}$$\dist(x;S)\leq \left(L\sqrt{12\eta}+\frac{2}{\sqrt{3\eta}}\right)\cdot \sqrt{f(x)-\inf f_x},$$ whenever $f(x)-\inf f_x< \frac{3\eta\gamma^2}{16}$ and $x$ lies in ${\bf B}_{\gamma/2}(x^*)$.
\end{itemize}
\end{corollary}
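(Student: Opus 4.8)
The plan is to derive the Model error-bound by combining the Slope error-bound hypothesis with the perturbation result for model improvement under quadratic error, namely Corollary~\ref{cor:perturb:func}, in exactly the same way that Theorem~\ref{thm:slop_subreg} combined the Slope error-bound with Corollary~\ref{cor:quad_gr}. First I would fix a point $x\in{\bf B}_{\gamma/2}(x^*)$ with $\Delta_x:=f(x)-\inf f_x<\frac{3\eta\gamma^2}{16}$, and invoke Corollary~\ref{cor:perturb:func} to produce a point $\hat x$ with $d(\hat x,x)\leq\sqrt{4/(3\eta)}\cdot\sqrt{\Delta_x}$, $f(\hat x)\leq f(x)+\tfrac13\Delta_x$, and $|\nabla f|(\hat x)\leq\sqrt{12\eta}\cdot\sqrt{\Delta_x}$.

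Next I would check that $\hat x$ actually lies in the ball ${\bf B}_{\gamma}(x^*)$ where the slope error-bound is available. By the triangle inequality, $d(\hat x,x^*)\leq d(\hat x,x)+d(x,x^*)<\sqrt{4/(3\eta)}\cdot\sqrt{\Delta_x}+\gamma/2$; the condition $\Delta_x<\frac{3\eta\gamma^2}{16}$ is precisely what forces $\sqrt{4/(3\eta)}\cdot\sqrt{\Delta_x}<\gamma/2$, so $d(\hat x,x^*)<\gamma$ and the slope error-bound applies at $\hat x$.

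Then I would run the same distance-chasing estimate as in the proof of Theorem~\ref{thm:slop_subreg}: from the slope error-bound and the triangle inequality,
$$L\cdot|\nabla f|(\hat x)\geq\dist(\hat x;S)\geq\dist(x;S)-d(\hat x,x)\geq\dist(x;S)-\sqrt{\tfrac{4}{3\eta}}\cdot\sqrt{\Delta_x}.$$
Substituting the bound $|\nabla f|(\hat x)\leq\sqrt{12\eta}\cdot\sqrt{\Delta_x}$ and rearranging gives $\dist(x;S)\leq\bigl(L\sqrt{12\eta}+\sqrt{4/(3\eta)}\bigr)\cdot\sqrt{\Delta_x}$, and since $\sqrt{4/(3\eta)}=2/\sqrt{3\eta}$ this is exactly the claimed inequality. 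There is no real obstacle here; the only point requiring a little care is the radius bookkeeping — making sure the constant $\frac{3\eta\gamma^2}{16}$ in the hypothesis is exactly what is needed to keep $\hat x$ inside ${\bf B}_{\gamma}(x^*)$ — and this is the one place I would double-check the arithmetic, since everything else is a direct quotation of the two cited results.
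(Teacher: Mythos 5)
Your proposal is correct and follows essentially the same argument as the paper: invoke Corollary~\ref{cor:perturb:func} to produce $\hat x$, verify $\hat x\in{\bf B}_{\gamma}(x^*)$ via the triangle inequality and the hypothesis $\Delta_x<\frac{3\eta\gamma^2}{16}$, then chain the slope error-bound with the point-proximity and near-stationarity estimates. The radius arithmetic you flag for double-checking does work out exactly as you describe ($\sqrt{4/(3\eta)}\cdot\sqrt{\Delta_x}<\gamma/2$), and is the same bookkeeping the paper performs.
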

\begin{proof}
Suppose the inequality $f(x)-\inf f_x< \frac{3\eta\gamma^2}{16}$ holds and $x$ lies in ${\bf B}_{\epsilon/2}(x^*)$. Define $\Delta_x:=f(x)-\inf f_x$ and let $\hat x$ be the point guaranteed to exist by Corollary~\ref{cor:perturb:func}. We deduce 
\begin{align*}
d(\hat x,x^*)&\leq d(\hat x,x)+d(x,x^*)\leq \sqrt{\frac{4}{3\eta}}\cdot\sqrt{\Delta_x}+d(x,x^*)<\gamma.
\end{align*}
	Thus $\hat x$ lies in ${\bf B}_{\gamma}(x^*)$ and we obtain 
	\begin{align*}
	L\cdot |\nabla f|(\hat x)\geq \dist\left(\hat x; S\right)\geq \dist\left(x; S\right)-d(x,\hat x)\geq \dist(x;S)-\sqrt{\frac{4}{3\eta}}\cdot\sqrt{\Delta_x}.
%	&\geq  \dist\left(x; S\right)-2d(x^+,x).
	\end{align*}
	Taking into account the inequality $|\nabla f|(\hat x)\leq \sqrt{12\eta} \cdot \sqrt{\Delta_x}$, the result follows.
\end{proof}

Finally in the inexact regime, the slope error-bound (as in Theorem~\ref{thm:slop_subreg}) implies an {\em inexact} error-bound condition.

\begin{corollary}
	[Error-bounds under  approximate optimality]\label{cor:slope_subreg_approx_opt}{\hfill \\ }
Let $S$ be an arbitrary set and fix a point $x^*\in S$ satisfying the condition
 \begin{itemize}
\item {\bf(Slope error-bound)}$\quad \dist(x;S)\leq L\cdot |\nabla f|(x)\quad \text{ for all }\quad x\in {\bf B}_{\gamma}(x^*)$.
\end{itemize}
Consider a closed function $f_{x}\colon\mathcal X\to\overline{\R}$ and suppose that for some $\eta>0$ the  inequality 
$$|f_{x}(y)-f(y)|\leq \frac{\eta}{2}d^2(y,x)\qquad \textrm{ holds for all }  y\in \mathcal X.$$ 
%where $\omega$ is a superlinear growth function.
 %$\|x_k-x_{k-1}\|\to 0$. 
 Define the constant $\mu:=2\sqrt{L(5L\eta+4)}$.
Then letting $x^+$ be any point satisfying $f_x(x^+)\leq \inf f_x +\epsilon$, the following two error-bounds hold:
%		Consider a sequence of points $x_k$ and lower-semicontinuous functions $f_{x_k}\colon\mathcal X\to\overline{\R}$ satisfying  $d(x_{k+1},x_{k})\to 0$ and $f(x_{k+1})\leq \inf f_{x_k}+\epsilon_k$ for some sequence $\epsilon_k\to 0$. Suppose moreover that the inequality 
%		$$|f_{x_k}(y)-f(y)|\leq \frac{\eta}{2}d^2(y,x_k)\qquad \textrm{ holds for all indices } k \textrm{ and points } y\in \mathcal X,$$ 
%		where $\eta>0$ is a real number.
%	%where $\omega$ is a superlinear growth function.
%	%$\|x_k-x_{k-1}\|\to 0$. 
%	Fix a point $x^*$, for which their exists a subsequence of $(x_{k},f(x_k))$ converging to $(x^*,f(x^*))$. Note that $x^*$ is stationary for $f$ by Corollary~\ref{cor:subseq_inexact}.
%	 Suppose that the condition holds:
%	\begin{itemize}
%		\item {\bf(Slope subregularity)}$$ \dist(x,S)\leq L\cdot |\nabla f|(x)\qquad \text{ for all }\qquad x\in {\bf B}_{\gamma}(x^*)$$
%	\end{itemize}
%	Then  in terms of the constant $\mu:=2\sqrt{L(5L\eta+4)}$, the approximate error bound holds:
	\begin{itemize}
		\item {\bf(Step-size error-bound)}$$ \dist(x;S)\leq \mu\sqrt{\epsilon}+(7L\eta+6)\cdot d(x^+,x)$$
		whenever $\sqrt{\epsilon}< \gamma\mu/12L$, $d(x^+,x)<\gamma/9$, and
		$x^+$ lies in ${\bf B}_{\gamma/3}(x^*)$.
		\item {\bf(Model error-bound)}
		$$\dist(x;S)\leq \left(L\sqrt{12\eta}+\frac{2}{\sqrt{3\eta}}\right)\sqrt{f(x)-f_{x}(x_+)+\epsilon}.$$
		whenever $f(x)-\inf f_x< \frac{3\eta\gamma^2}{16}$ and $x$ lies in ${\bf B}_{\gamma/2}(x^*)$.
	\end{itemize}
\end{corollary}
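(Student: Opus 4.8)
The plan is to prove the two claimed error-bounds independently. The \emph{model error-bound} will drop out almost for free from Corollary~\ref{cor:func_error}, whereas the \emph{step-size error-bound} requires feeding the inexact perturbation result, Theorem~\ref{thm:main_inexact}, into the slope error-bound evaluated at the nearby point it produces.

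First I would dispatch the model error-bound. Since $x^+$ satisfies $f_x(x^+)\le\inf f_x+\epsilon$, we immediately get $f(x)-\inf f_x\le f(x)-f_x(x^+)+\epsilon$. The two hypotheses $f(x)-\inf f_x<\tfrac{3\eta\gamma^2}{16}$ and $x\in{\bf B}_{\gamma/2}(x^*)$ are \emph{exactly} those of Corollary~\ref{cor:func_error} (applied with $\Delta_x:=f(x)-\inf f_x$), which yields $\dist(x;S)\le\big(L\sqrt{12\eta}+\tfrac{2}{\sqrt{3\eta}}\big)\sqrt{\Delta_x}$. Monotonicity of the square root together with the displayed inequality then replaces $\sqrt{\Delta_x}$ by $\sqrt{f(x)-f_x(x^+)+\epsilon}$, which is the asserted bound. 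This half needs no new work.

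For the step-size error-bound I would apply Theorem~\ref{thm:main_inexact} with the quadratic growth function $\omega(t)=\tfrac{\eta}{2}t^2$ (so $\omega'(t)=\eta t$ and $\tfrac{2\omega(t)}{\omega'(t)}=t$), keeping the Ekeland radius $\rho>0$ free. This produces points $z,\hat x$ with $d(x^+,z)\le\epsilon/\rho$, $d(z,\hat x)\le d(z,x)$, and $|\nabla f|(\hat x)\le\rho+\eta\,d(z,x)+\eta\,d(\hat x,x)$. The routine part is a chain of triangle inequalities: $d(z,x)\le\epsilon/\rho+d(x^+,x)$, hence $d(\hat x,x)\le 2d(z,x)$ and $d(x,\hat x)\le 2\epsilon/\rho+2d(x^+,x)$, so the near-stationarity estimate collapses to $|\nabla f|(\hat x)\le\rho+c_1\tfrac{\epsilon}{\rho}+c_1\,d(x^+,x)$ with $c_1$ a small multiple of $\eta$. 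Before invoking the slope error-bound one checks $\hat x\in{\bf B}_\gamma(x^*)$: bounding $d(\hat x,x^*)\le d(\hat x,x^+)+d(x^+,x^*)\le\big(2\epsilon/\rho+d(x^+,x)\big)+d(x^+,x^*)$, the hypotheses $x^+\in{\bf B}_{\gamma/3}(x^*)$, $d(x^+,x)<\gamma/9$, and $\sqrt\epsilon<\gamma\mu/12L$ (the last of which, once $\rho$ is chosen proportional to $\sqrt\epsilon$, keeps $\epsilon/\rho$ below $\gamma/6$) force $d(\hat x,x^*)<\gamma$. Then the slope error-bound gives $\dist(\hat x;S)\le L|\nabla f|(\hat x)$, and combining with $\dist(x;S)\le\dist(\hat x;S)+d(x,\hat x)$ produces an inequality of the form $\dist(x;S)\le L\rho+A\tfrac{\epsilon}{\rho}+B\,d(x^+,x)$ with $A,B$ explicit affine functions of $L\eta$. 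Minimizing the right-hand side over $\rho>0$ (optimum $\rho=\sqrt{A\epsilon/L}$, giving the term $2\sqrt{LA}\,\sqrt\epsilon$) and, if needed, relaxing the resulting constants to the slightly looser $\mu=2\sqrt{L(5L\eta+4)}$ and $7L\eta+6$ finishes the argument.

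The only step demanding genuine care is the bookkeeping in the step-size bound: one must tune the free parameter $\rho$ to balance the trade-off between $L\rho$ and $A\epsilon/\rho$ while \emph{simultaneously} keeping $\hat x$ inside the ball ${\bf B}_\gamma(x^*)$ on which the slope error-bound is postulated. It is exactly this double constraint that forces the admissibility conditions $\sqrt\epsilon<\gamma\mu/12L$, $d(x^+,x)<\gamma/9$, and $x^+\in{\bf B}_{\gamma/3}(x^*)$; no idea beyond Theorem~\ref{thm:main_inexact} and the triangle inequality enters.
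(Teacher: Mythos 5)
Your proposal is correct and follows essentially the same route as the paper: the model error-bound is read off from Corollary~\ref{cor:func_error} via $f(x)-\inf f_x\le f(x)-f_x(x^+)+\epsilon$, and the step-size bound comes from the inexact perturbation result with a free Ekeland parameter $\rho$, triangle-inequality bookkeeping, verification that $\hat x\in{\bf B}_{\gamma}(x^*)$, and minimization over $\rho$ yielding $\rho=\sqrt{(5L\eta+4)\epsilon/L}$ and hence $\mu\sqrt{\epsilon}$. Your observation that the stated constants are slightly looser than what tight triangle inequalities produce is accurate of the paper's own computation as well.
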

\begin{proof}
	Consider two points $x,x^+$ satisfying 
	 $\sqrt{\epsilon}\leq \gamma\mu/12L$, $d(x^+,x)<\gamma/9$, and
		$x^+\in{\bf B}_{\gamma/3}(x^*)$.
	Let $\hat x, z$ be the points guaranteed to exist by Corollary~\ref{cor:quad_gr} for some $\rho$; we will decide on the value of $\rho>0$ momentarily. First, easy manipulations using the triangle inequality yield
	\begin{align*}
	d(\hat x,z)&\leq d(z,x),\qquad\qquad~~ d(z,x^+)\leq d(z,x)+d(x^+,x), \\ d(z,x)&\leq \epsilon/\rho+d(x^+,x), \qquad d(x^+,\hat x)\leq 4\epsilon/\rho+5d(x^+,x).
	\end{align*}
	Suppose for the moment $\hat x$ lies in ${\bf B}_{\gamma}(x^+)$; we will show after choosing $\rho$ appropriately that this is the case.
	Then we obtain the inequality
	\begin{align*}
	L\cdot |\nabla f|(\hat x)\geq \dist\left(\hat x; S\right)&\geq \dist\left(x; S\right)-d(x^+,\hat x)-d(x^+,x)\\
	&\geq \dist(x;S)- 4\epsilon/\rho_k-6d(x^+,x).
	\end{align*}	
	Taking into account the inequality 
	$$|\nabla f|(\hat x)\leq \rho+\eta(d(z,x)+d(\hat x,x))\leq \rho +\eta(5\epsilon/\rho+7d(x^+,x)),$$ we conclude
	$$\dist\left(x; S\right)\leq L\rho+\frac{5L\eta\epsilon}{\rho}+\frac{4\epsilon}{\rho}+(7L\eta+6)\cdot d(x^+,x),$$ as claimed. Minimizing the right-hand-side in $\rho$ yields the choice $\rho:=\sqrt{\frac{(5L\eta+4)\epsilon}{L}}$. With this choice, the inequality above becomes
	$$\dist\left(x; S\right)\leq 2\sqrt{L(5L\eta+4)\epsilon}+(7L\eta+6)\cdot d(x^+,x).$$
	Finally, let us verify that $\hat{x}$ indeed lies in ${\bf B}_{\gamma}(x^*)$.
	To see this, simply observe
		\begin{align*}
		d(\hat x,x^*)&\leq d(\hat x,z) +d(z,x^+)+d(x^+,x^*)\leq 2d(z,x)+d(x^+,x)+d(x^+,x^*)\\
		&\leq 2\epsilon_k/\rho_k+3d(x^+,x)+d(x^+,x^*)< \gamma.
		\end{align*}
		The result follows. The step-size error bound condition follows. The functional error-bound is immediate from Corollary~\ref{cor:func_error}.
\end{proof}

In particular, in the notation of Corollary~\ref{cor:slope_subreg_approx_opt}, if one wishes the error $d(x^+,x)$ to linearly bound the distance $d(x;S)$, then one should ensure that the tolerance $\epsilon$ is on the order of $d^2(x^+,x)$.

\subsection{Near-stationarity for the subproblems}\label{subsec:near-crit}
In this section, we explore the setting where $x^+$ is only $\epsilon$-stationary for $f_{x}$. To make progress in this regime, however, we must first assume a linear structure on the metric space. We suppose throughout that $\X$ is a Banach space, and denote its dual by $\mathcal X^*$. For any dual element $v\in \X^*$ and a point $x\in \X$, we use the notation $\langle v,x\rangle:=v(x)$. 
 Second, the property $|\nabla f_x(x^+)|\leq \epsilon$ alone appears to be too weak. Instead, we will require a type of uniformity in the slopes.
% We will assume that $x^+$ minimizes
%the function	
%$$y\mapsto f_x(y)-\langle v^*,y-x^+\rangle +\eta\|y-x^+\|^2$$
%for some dual element $v\in \X^*$ with $\|v^*\|\leq \epsilon$. 
In the simplest case, we will assume that $x^+$ is such that the function $f_x$ majorizes the simple quadratic 
$$f_x(x^+)+\langle v^*,\cdot-x^+\rangle-\eta\|\cdot-x^+\|^2$$
where  $v\in \X^*$ is some dual element satisfying $\|v^*\|\leq \epsilon$. In the language of variational analysis, $v$ is a {\em proximal subgradient} of $f_x$ at $x^+$; see e.g. \cite{CLSW,prox_subgrad}. A quick computation immediately shows the inequality $|\nabla f_x|(x^+)\leq \epsilon$. Assuming that $\eta$ is uniform throughout the iterative process will allow us to generalize the results of Section~\ref{sec:main}. Such uniformity is immediately implied by prox-regularity \cite{prox_reg} for example -- a broad and common setting for nonsmooth optimization.

\begin{corollary}[Perturbation result under approximate stationarity]\label{cor:capr_perturb2}
	Consider a closed function $f_x\colon\X\to\overline\R$ on a Banach space $\X$ such that the inequality
	$$|f_x(y)-f(y)|\leq \omega_1(d(x,y)) \quad \textrm{ holds for all }  y\in\X,$$
	where $\omega_1$ is some growth function. Suppose moreover that for some point $x^+\in \X$, a dual element  $v\in \X^*$, and a growth function $\omega_2$, the inequality
	 	$$f_x(y)\geq f_x(x^+)+\langle v,y-x^+\rangle -\omega_2(d(y,x^+))\qquad \textrm{ holds for all }y\in \X.$$
 Then there exists a point $\hat x$ satisfying	
	\begin{enumerate}
		\item {\bf (point proximity)} $\quad d(x^+,\hat{x})\leq 2\cdot \frac{\omega_1(d(x^+,x))}{\omega'_1(d(x^+,x))}$,
		\item {\bf (value proximity)} $\quad f(\hat x)\leq f(x^+)+\langle v,\hat x-x^+\rangle+ \omega_1(d(x^+,x))-\omega_2(d(\hat x,x^+))$,
		\item {\bf (near-stationarity)} $\quad |\nabla f|(\hat x)\leq \|v\|+ \omega'_1(d(x^+,x))+\omega'_1(d(\hat x,x))+\omega'_2(d(\hat x,x^+))$.
	\end{enumerate}
\end{corollary}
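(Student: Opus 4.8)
The plan is to reduce Corollary~\ref{cor:capr_perturb2} to Theorem~\ref{thm:main} by absorbing the linear term $\langle v,\cdot-x^+\rangle$ into both $f$ and $f_x$. Concretely, I would define the tilted functions
\[
\widetilde f(y):=f(y)-\langle v,y-x^+\rangle\qquad\text{and}\qquad \widetilde f_x(y):=f_x(y)-\langle v,y-x^+\rangle .
\]
Since the two functions differ by the same linear functional, the approximation bound is preserved verbatim: $|\widetilde f_x(y)-\widetilde f(y)|=|f_x(y)-f(y)|\le \omega_1(d(x,y))$ for all $y$. The role of the proximal-type inequality on $f_x$ is exactly to guarantee that $x^+$ is a minimizer of $\widetilde f_x$: indeed $\widetilde f_x(y)=f_x(y)-\langle v,y-x^+\rangle\ge f_x(x^+)-\omega_2(d(y,x^+))\ge f_x(x^+)=\widetilde f_x(x^+)$ when $y=x^+$, and more to the point the displayed hypothesis rearranges to $\widetilde f_x(y)\ge \widetilde f_x(x^+)-\omega_2(d(y,x^+))$; combined with $\omega_2$ being a growth function (so $\omega_2(t)=o(t)$ near $0$ is not even needed — $\omega_2\ge 0$ suffices together with $\omega_2(0)=0$), this shows $\widetilde f_x(x^+)=\inf\widetilde f_x$. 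So $x^+$ is a genuine minimizer of $\widetilde f_x$, and I can invoke Theorem~\ref{thm:main} with $f\rightsquigarrow\widetilde f$, $f_x\rightsquigarrow\widetilde f_x$, growth function $\omega_1$, and this same $x^+$.

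Theorem~\ref{thm:main} then produces a point $\hat x$ with: point proximity $d(x^+,\hat x)\le 2\,\omega_1(d(x^+,x))/\omega_1'(d(x^+,x))$ — which is exactly claim~(1), since the tilt does not touch the distances; value proximity $\widetilde f(\hat x)\le \widetilde f(x^+)+\omega_1(d(x^+,x))$; and near-stationarity $|\nabla\widetilde f|(\hat x)\le \omega_1'(d(x^+,x))+\omega_1'(d(\hat x,x))$. It remains to translate the latter two back to $f$. For the value bound, $\widetilde f(\hat x)=f(\hat x)-\langle v,\hat x-x^+\rangle$ and $\widetilde f(x^+)=f(x^+)$, so $f(\hat x)\le f(x^+)+\langle v,\hat x-x^+\rangle+\omega_1(d(x^+,x))$; the extra $-\omega_2(d(\hat x,x^+))$ in claim~(2) is a bonus term that one can insert freely because $\omega_2\ge 0$, or — if an equality-flavoured statement is wanted — one recovers it by using the sharper estimate $\widetilde f(x^+) = f_x(x^+) + (f(x^+)-f_x(x^+))$ together with the hypothesis evaluated at $y=\hat x$; I would just note $-\omega_2(d(\hat x,x^+))\le 0$ and keep the weaker, still-correct inequality, matching the stated form. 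For near-stationarity I use the elementary estimate that tilting by a linear functional of dual norm $\|v\|$ changes the slope by at most $\|v\|$: $|\nabla f|(\hat x)\le |\nabla\widetilde f|(\hat x)+\|v\|$. Plugging in gives $|\nabla f|(\hat x)\le \|v\|+\omega_1'(d(x^+,x))+\omega_1'(d(\hat x,x))$, which is claim~(3) with the harmless extra nonnegative term $\omega_2'(d(\hat x,x^+))$ appended.

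I would also dispatch the degenerate case $x^+=x$ exactly as in Theorem~\ref{thm:main}: then $\omega_1(d(x^+,x))=0$ forces $\widetilde f_x(x)=\inf\widetilde f_x$ with the perturbation vanishing, so one checks directly that $|\nabla \widetilde f|(x)=|\nabla\widetilde f_x|(x)=0$, hence $|\nabla f|(x)\le\|v\|$, and $\hat x:=x$ satisfies all three claims (with the convention $0/0=0$ in claim~(1), consistent with the rest of the paper). The only place requiring a little care — the ``main obstacle'', though it is mild — is making sure the slope-under-tilting inequality $|\nabla f|(\hat x)\le |\nabla\widetilde f|(\hat x)+\|v\|$ is valid in the Banach-space setting; this is why the corollary assumes $\X$ is a Banach space with dual pairing $\langle v,x\rangle=v(x)$. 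It follows from the definition of the slope: for $y\to\hat x$,
\[
\frac{(f(\hat x)-f(y))^+}{d(\hat x,y)}\le \frac{(\widetilde f(\hat x)-\widetilde f(y))^+}{d(\hat x,y)}+\frac{|\langle v,\hat x-y\rangle|}{d(\hat x,y)}\le \frac{(\widetilde f(\hat x)-\widetilde f(y))^+}{d(\hat x,y)}+\|v\|,
\]
using $|\langle v,\hat x-y\rangle|\le\|v\|\,\|\hat x-y\|=\|v\|\,d(\hat x,y)$ and $(a+b)^+\le a^+ + b^+$; taking $\limsup$ gives the claim. Everything else is bookkeeping with the triangle inequality and the nonnegativity of $\omega_2$, so no further subtlety arises.
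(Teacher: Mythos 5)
Your reduction breaks at the very first step: with your choice of tilt, $x^+$ is \emph{not} a minimizer of $\widetilde f_x(y):=f_x(y)-\langle v,y-x^+\rangle$, so Theorem~\ref{thm:main} cannot be invoked. The hypothesis rearranges to $\widetilde f_x(y)\ge \widetilde f_x(x^+)-\omega_2(d(y,x^+))$, and since $\omega_2$ is a growth function it is \emph{strictly positive} away from the origin; hence this inequality permits $\widetilde f_x(y)$ to dip \emph{below} $\widetilde f_x(x^+)$ by as much as $\omega_2(d(y,x^+))$. Your chain ``$f_x(x^+)-\omega_2(d(y,x^+))\ge f_x(x^+)$'' has the sign backwards: it would require $\omega_2\le 0$. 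This is exactly the difference between $v$ being a proximal subgradient of $f_x$ at $x^+$ (what is assumed) and an honest subgradient (what your argument needs). The paper's proof repairs this by tilting with the \emph{full} correction, $\widetilde f_x(y):=f_x(y)-\langle v,y-x^+\rangle+\omega_2(d(y,x^+))$ and likewise for $\widetilde f$; then the hypothesis gives $\widetilde f_x(y)\ge f_x(x^+)=\widetilde f_x(x^+)$ on the nose, the approximation bound between $\widetilde f_x$ and $\widetilde f$ is unchanged, and Theorem~\ref{thm:main} applies. Untilting then produces precisely the $-\omega_2(d(\hat x,x^+))$ in the value bound and the $+\omega_2'(d(\hat x,x^+))$ in the slope bound.

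A secondary error compounds this: you cannot ``insert freely'' the term $-\omega_2(d(\hat x,x^+))$ into the right-hand side of the value proximity claim. That term is nonpositive, so adding it makes the asserted inequality \emph{stronger}, not weaker; having proved $f(\hat x)\le A$ you may not conclude $f(\hat x)\le A-\omega_2(\cdots)$. (Your remark about appending $+\omega_2'(d(\hat x,x^+))$ in the near-stationarity bound is fine, since there the extra term is nonnegative and only weakens the claim.) Both $\omega_2$-terms in the conclusion are not decorations: they are forced by, and fall out of, the correct choice of tilted functions. The parts of your write-up that are sound --- preservation of the approximation bound under a common tilt, and the estimate $|\nabla f|(\hat x)\le|\nabla\widetilde f|(\hat x)+\|v\|$ for a linear tilt of dual norm $\|v\|$ --- carry over to the paper's construction essentially unchanged, with the slope estimate picking up the additional $\omega_2'(d(\hat x,x^+))$ from the nonlinear part of the tilt.
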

\begin{proof}
Define the functions $\widetilde f(y):=f(y)-\langle v,y-x^+\rangle +\omega_2(d(y,x^+))$  	and $\widetilde f_x(y):=f_x(y)-\langle v,y-x^+\rangle +\omega_2(d(y,x^+))$. Note that $x^+$ minimizes $\widetilde f_x$ and that the inequality
	$$|\widetilde f_x(y)-\widetilde f(y)|\leq \omega_1(d(x,y)) \quad \textrm{ holds for all }  y\in \mathcal{X}.$$
	Applying Theorem~\ref{thm:main}, we obtain a point $\hat x$ satisfying the point proximity claim, along with the inequalities $\widetilde f(\hat x)\leq \widetilde f(x^+)+\omega_1(d(x^+,x))$ and 
$|\nabla \widetilde f|(\hat x)\leq \omega'_1(d(x^+,x))+\omega'_1(d(\hat x,x))$.
The value proximity claim follows directly from definitions, while the near-stationarity is immediate from the inequality,
$|\nabla \widetilde f|(\hat x)\geq |\nabla f|(\hat x)-\|v\|-\omega'_2(d(\hat x,x^+)).$
The result follows.
\end{proof}

As an immediate consequence, we obtain the subsequence convergence result.

\begin{corollary}[Subsequence convergence under  approximate optimality]\label{cor:subseq_inexact3}
	Consider a sequence of points $x_k$  and closed functions $f_{x_k}\colon\mathcal X\to\overline{\R}$ satisfying 
	$$|f_{x_k}(y)-f(y)|\leq \omega_1(d(y,x_k))\qquad \textrm{ for all indices } k \textrm{ and points } y\in \mathcal X,$$ 
	where $\omega_1$ is some proper growth function.
 Suppose that the inequality
	 	$$f_{x_{k}}(y)\geq f_{x_k}(x_{k+1})+\langle v_{k+1},y-x_{k+1}\rangle -\omega_2(d(y,x_{k+1}))\qquad\qquad \textrm{ holds for all }k \textrm{ and all }y\in \X,$$
	 	where $\omega_2$ is some proper growth function and $v_k\in\X^*$ are some dual elements. Assume moreover that $d(x_{k+1},x_k)$ and $\|v_k\|$ tend to zero.  
	If $(x^*,f(x^*))$ is a limit point of the sequence $(x_{k},f(x_k))$, then  $x^*$ is 
	stationary for $f$.
\end{corollary}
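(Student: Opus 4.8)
The plan is to mimic the proof of Corollary~\ref{cor:stat} almost verbatim, substituting Corollary~\ref{cor:capr_perturb2} in place of Theorem~\ref{thm:main}. First I would fix a subsequence $x_{k_i}$ with $(x_{k_i},f(x_{k_i}))\to(x^*,f(x^*))$, and for each $i$ invoke Corollary~\ref{cor:capr_perturb2} applied at the base point $x=x_{k_i-1}$, with model $f_{x_{k_i-1}}$, approximate minimizer $x^+=x_{k_i}$, dual element $v=v_{k_i}$, and growth functions $\omega_1,\omega_2$. This yields points $\hat x_{k_i-1}$ enjoying the three listed proximity/stationarity estimates in terms of $d(x_{k_i},x_{k_i-1})$, $d(\hat x_{k_i-1},x_{k_i})$, $d(\hat x_{k_i-1},x_{k_i-1})$, and $\|v_{k_i}\|$.

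Next I would push these estimates to the limit. Point proximity gives $d(x_{k_i},\hat x_{k_i-1})\le 2\,\omega_1(d(x_{k_i},x_{k_i-1}))/\omega_1'(d(x_{k_i},x_{k_i-1}))$; since $\omega_1$ is a \emph{proper} growth function and $d(x_{k_i},x_{k_i-1})\to0$, the right side tends to $0$, so $\hat x_{k_i-1}\to x^*$ as well; by the triangle inequality $d(\hat x_{k_i-1},x_{k_i-1})\to0$ too. For value proximity, I would bound $\langle v_{k_i},\hat x_{k_i-1}-x_{k_i}\rangle\le\|v_{k_i}\|\cdot d(\hat x_{k_i-1},x_{k_i})\to0$ and $-\omega_2(d(\hat x_{k_i-1},x_{k_i}))\le0$, so $\ls_{i}f(\hat x_{k_i-1})\le f(x^*)$; lower semicontinuity of $f$ then forces $f(\hat x_{k_i-1})\to f(x^*)$. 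For near-stationarity, $|\nabla f|(\hat x_{k_i-1})\le\|v_{k_i}\|+\omega_1'(d(x_{k_i},x_{k_i-1}))+\omega_1'(d(\hat x_{k_i-1},x_{k_i-1}))+\omega_2'(d(\hat x_{k_i-1},x_{k_i}))$; each term vanishes in the limit (using $\omega_1'(t),\omega_2'(t)\to0$ as $t\to0$, which holds since both are proper growth functions, and $\|v_k\|\to0$ by hypothesis). Hence $|\nabla f|(\hat x_{k_i-1})\to0$ along a sequence converging to $x^*$ in both argument and function value, which is exactly $\overline{|\nabla f|}(x^*)=0$, i.e. $x^*$ is stationary.

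One technical point worth checking is that the convention $\frac{0}{0}=0$ in the point-proximity bound of Corollary~\ref{cor:capr_perturb2} handles the (harmless) case $x_{k_i}=x_{k_i-1}$, in which case $\hat x_{k_i-1}$ may be taken to be $x_{k_i}$ and all estimates degenerate trivially. I do not anticipate a genuine obstacle here: the only mild subtlety is that the near-stationarity estimate now carries the extra term $\omega_2'(d(\hat x_{k_i-1},x_{k_i}))$ compared to Corollary~\ref{cor:stat}, but since $d(\hat x_{k_i-1},x_{k_i})\to0$ by point proximity and $\omega_2$ is proper, this term is controlled exactly as the others. The statement of the corollary therefore follows, and I would leave the routine limiting arithmetic to the reader, as in the analogous Corollary~\ref{cor:subseq_inexact}.
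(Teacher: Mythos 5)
Your proposal is correct and is exactly the argument the paper intends: the paper states this corollary as an ``immediate consequence'' of Corollary~\ref{cor:capr_perturb2}, obtained by repeating the proof of Corollary~\ref{cor:stat} with the extra terms $\|v_{k_i}\|$ and $\omega_2'(d(\hat x_{k_i-1},x_{k_i}))$ absorbed in the limit, which is precisely what you do. Your handling of the value-proximity inner-product term and of the degenerate case $x_{k_i}=x_{k_i-1}$ is also sound.
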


Finally, the following inexact error-bound result holds, akin to Theorem~\ref{thm:slop_subreg}.
\begin{corollary}[Error-bounds under approximate stationarity]
Let $S$ be an arbitrary set and fix a point $x^*\in S$ satisfying the condition
 \begin{itemize}
\item {\bf(Slope error-bound)}$\quad \dist(x,S)\leq L\cdot |\nabla f|(x)\quad \text{ for all }\quad x\in {\bf B}_{\gamma}(x^*)$.
\end{itemize}
Consider a closed function $f_{x}\colon\mathcal X\to\overline{\R}$ and suppose that for some $\eta>0$ the  inequality 
$$|f_{x}(y)-f(y)|\leq \frac{\eta}{2}\cdot\|y-x\|^2\qquad \textrm{ holds for all }  y\in \mathcal X.$$ 
%where $\omega$ is a superlinear growth function.
 %$\|x_k-x_{k-1}\|\to 0$. 
	Fix a point $x^+$ and a dual element $v\in \X^*$ so that the inequality 
	$$f_{x}(y)\geq f_x(x^+)+\langle v,y-x^+\rangle -\frac{\eta}{2}\|y-x^+\|^2\qquad \textrm{ holds for all }y\in \X.$$
Then the approximate error-bound holds:
	\begin{itemize}
		\item {\bf(Step-size error-bound)}$\quad\dist\left(x; S\right)\leq L\|v\|+(4\eta L+2) \|x^+-x\|\quad \textrm{ when }x,x^+\in {\bf B}_{\gamma/3}(x^*).$
	\end{itemize}
\end{corollary}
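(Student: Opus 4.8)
The plan is to mimic the proof of Theorem~\ref{thm:slop_subreg}, but using the approximate-stationarity perturbation result, Corollary~\ref{cor:capr_perturb2}, in place of Corollary~\ref{cor:quad_gr}. First I would apply Corollary~\ref{cor:capr_perturb2} with the choices $\omega_1(t)=\omega_2(t)=\frac{\eta}{2}t^2$, which are permitted since the hypotheses of the present corollary supply exactly the quadratic two-sided model bound and the proximal-subgradient inequality with these growth functions. This produces a point $\hat x$ with $d(x^+,\hat x)\leq 2\cdot\frac{(\eta/2)\|x^+-x\|^2}{\eta\|x^+-x\|}=\|x^+-x\|$ (point proximity), and near-stationarity $|\nabla f|(\hat x)\leq \|v\|+\eta\|x^+-x\|+\eta d(\hat x,x)+\eta d(\hat x,x^+)$.

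Next I would bound $d(\hat x,x)$ by the triangle inequality: $d(\hat x,x)\leq d(\hat x,x^+)+d(x^+,x)\leq 2\|x^+-x\|$, and $d(\hat x,x^+)\leq\|x^+-x\|$; substituting gives $|\nabla f|(\hat x)\leq \|v\|+\eta\|x^+-x\|+2\eta\|x^+-x\|+\eta\|x^+-x\|=\|v\|+4\eta\|x^+-x\|$. Then I would check that $\hat x\in{\bf B}_\gamma(x^*)$: since $x,x^+\in{\bf B}_{\gamma/3}(x^*)$, we have $d(\hat x,x^*)\leq d(\hat x,x^+)+d(x^+,x^*)\leq \|x^+-x\|+d(x^+,x^*)\leq (d(x^+,x^*)+d(x,x^*))+d(x^+,x^*)<\gamma$ (in fact $<\gamma$ using the $\gamma/3$ bounds). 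Hence the slope error-bound applies at $\hat x$.

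Finally I would close the argument exactly as in Theorem~\ref{thm:slop_subreg}: from the slope error-bound, $L\cdot|\nabla f|(\hat x)\geq \dist(\hat x;S)\geq \dist(x;S)-d(x^+,\hat x)-d(x^+,x)\geq\dist(x;S)-2\|x^+-x\|$. Combining with $|\nabla f|(\hat x)\leq\|v\|+4\eta\|x^+-x\|$ yields $\dist(x;S)\leq L\|v\|+4\eta L\|x^+-x\|+2\|x^+-x\|=L\|v\|+(4\eta L+2)\|x^+-x\|$, which is exactly the claimed step-size error-bound.

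I do not expect a genuine obstacle here — the proof is a direct combination of Corollary~\ref{cor:capr_perturb2} with the triangle inequality, and the only care needed is bookkeeping the distances $d(\hat x,x)$ and $d(\hat x,x^+)$ so that the constants line up with the stated $(4\eta L+2)$; tracking which radius ($\gamma$ versus $\gamma/3$) is needed where is the one mildly delicate point, but the $\gamma/3$ hypothesis is exactly what makes $\hat x$ land in ${\bf B}_\gamma(x^*)$.
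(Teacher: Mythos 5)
Your proposal is correct and follows exactly the paper's own argument: apply Corollary~\ref{cor:capr_perturb2} with $\omega_1=\omega_2=\tfrac{\eta}{2}t^2$, verify $\hat x\in{\bf B}_\gamma(x^*)$ via the triangle inequality, derive $|\nabla f|(\hat x)\leq\|v\|+4\eta\|x^+-x\|$, and combine with the slope error-bound. The only difference is that you spell out the bookkeeping behind the constant $4\eta$, which the paper states without detail.
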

\begin{proof}
The proof is entirely analogous to that of Theorem~\ref{thm:slop_subreg}. 
Consider two points $x,x^+\in {\bf B}_{\gamma/3}(x^*)$. Let $\hat x$ be the point guaranteed to exist by Corollary~\ref{cor:capr_perturb2}. We deduce 
\begin{align*}
d(\hat x,x^*)&\leq d(\hat x,x^+)+d(x^+,x^*)\leq d(x^+,x)+d(x^+,x^*)<\gamma.
\end{align*}
Thus $\hat x$ lies in ${\bf B}_{\epsilon}(x^*)$ and we deduce 
\begin{align*}
L\cdot |\nabla f|(\hat x)\geq \dist\left(\hat x; S\right)&\geq \dist\left(x; S\right)-d(x^+,\hat x)-d(x^+,x)\\
&\geq  \dist\left(x; S\right)-2d(x^+,x).
\end{align*}
Taking into account the inequality $|\nabla f|(\hat x)\leq \|v\|+4\eta \|x^+-x\|$, we conclude
$$\dist\left(x; S\right)\leq L\|v\|+(4\eta L+2)\cdot \|x^+-x\|,$$ as claimed.
\end{proof}

\subsection*{Conclusion}
In this paper, we considered a general class of nonsmooth minimization algorithms that use Taylor-like models. We showed that both the step-size and the improvement in the model's value can be used as reliable stopping criteria. We deduced subsequence convergence to stationary points, and error-bound conditions under natural regularity properties of the function. The results fully generalized to the regime where the models are minimized inexactly. 
Ekeland's variation principle (Theorem~\ref{thm:eke}) underlies all of our current work. Despite the wide uses of the principle in variational analysis, its impact on convergence of basic algorithms, such as those covered here and in \cite{error_bound_d_lewis,alt_cur}, is not as commonplace as it should be. We believe that this work takes an important step towards rectifying this disparity and the techniques presented here will pave the way for future algorithmic insight.

%
%\section{Convergence of trust-region algorithms}
%General trust-region methods proceed in the following way. Given a current iterate $x_k$, the algorithm forms a closed set $Q_{k}$ containing $x_k$, called the {\em trust-regions}, and a model function $f_{x_k}\colon Q_{k}\to\overline\R$. Then given a tolerance $\epsilon_k>0$, the next iterate $x_{k+1}$ is declared to be a near-minimizer of the model on the trust region:
% $$f_{x_k}(x_{k+1})-\inf_{y\in Q_{k}} f_{x_k}(y)\leq \epsilon_k.$$
%Henceforth, assume that for some $\eta >0$ the inequality  
%$$|f_{x_k}(y)-f(y)|\leq \frac{\eta}{2}d^2(y,x_k)\qquad \textrm{ holds for all }y\in Q_{x_k} \textrm{ and all }k.$$
%Clearly, if the trust-regions $Q_{k}$ shrink too quickly, then the algorithm will stagnate, and no convergence to stationary points can be assured. To avoid this scenario, we will assume a natural and common condition for trust algorithms relating the size of $Q_{k}$ to the model improvement. To this end, define the {\em inscribed radii}
%$$\rho_k:=\min_{\gamma\geq 0}~\left\{{\bf B}_{\gamma}(x_k)\subseteq Q_k\right\}$$
%and the model improvement
%$$\Delta_k:=f_{x_k}-\inf_{y\in Q_k} f_{x_k}(y).$$
%We henceforth assume the relationship
%$$\frac{\sqrt{\Delta_k}}{\rho_{k}}\to 0.$$
%In other words, we assume that the square root of the model improvement $\sqrt{\Delta_k}$ is significantly smaller than the inscribed radius $\rho_k$.
%This condition is typically assured within the trust region framework through the use of ``trial-steps''; see the monograph \cite{trust} or the manuscript \cite{composite_cart}. 
%

\nopagebreak
\bibliographystyle{plain}
%\parsep 0pt
\bibliography{dima_bib}

\end{document}